\theoremstyle{plain}
\newtheorem{theorem}{Theorem}
\newtheorem{lemma}[theorem]{Lemma}
\newtheorem{proposition}[theorem]{Proposition}
\theoremstyle{remark}
\def\om{\omega}
\def\Dd{\Delta}
\def\om{\omega}
\def\pp{\partial}
\def\na{\nabla}
\begin{document}
\title[2D MHD with mixed partial dissipation and magnetic diffusion]{Global regularity for the 2D MHD equations with mixed partial dissipation and magnetic diffusion}

\author{Chongsheng Cao}
\address{Department of Mathematics, Florida International University, Miami, FL 33199, USA}
\curraddr{}
\email{caoc@fiu.edu}
\thanks{}

\author{Jiahong Wu}
\address{Department of Mathematics, Oklahoma State University, Stillwater, OK 74078, USA}
\curraddr{}
\email{jiahong@math.okstate.edu}
\thanks{}

\subjclass[2000]{35B45, 35B65, 76W05}

\keywords{classical solutions, global regularity, MHD equations, partial dissipation and magnetic diffusion}

\date{}

\dedicatory{}

\begin{abstract}
Whether or not classical solutions of the 2D incompressible MHD equations without full dissipation and magnetic diffusion can develop finite-time singularities is a difficult issue.  A major result of this paper establishes the global regularity of classical solutions for the MHD equations with mixed partial dissipation and magnetic diffusion. In addition, the global existence, conditional regularity and uniqueness of a weak solution is obtained for the 2D MHD equations with only magnetic diffusion.
\end{abstract}

\maketitle

\vskip 0.1in
\section{Introduction}

This paper concerns itself with the fundamental issue of whether classical solutions of the 2D incompressible MHD equations can develop finite-time singularities. The 2D MHD equations under consideration assume the form
\begin{eqnarray}
&& u_t + u\cdot\nabla u =-\nabla p +\nu_1\,
 u_{xx} + \nu_2\, u_{yy} + b\cdot\nabla b,   \label{ueq}  \\
&& b_t + u\cdot\nabla b =\eta_1\; b_{xx} + \eta_2 \; b_{yy} + b\cdot\nabla u,  \label{beq}   \\
&& \nabla\cdot u=0,   \label{divu} \\
&& \nabla \cdot b=0, \, \label{divb}
\end{eqnarray}
where $(x,y)\in {\mathbf R}^2$, $t\ge 0$, $u=(u_1(x,y,t), \;u_2(x,y,t))$ denotes the 2D velocity field, $p=p(x,y,t)$ denotes the pressure, $b=(b_1(x,y,t),b_2(x,y,t))$ denotes the magnetic field,  and $\nu_1$, $\nu_2$, $\eta_1$ and $\eta_2$ are nonnegative real parameters.

\vskip.1in
When $\nu_1>0$, $\nu_2>0$, $\eta_1>0$ and $\eta_2>0$, (\ref{ueq})-(\ref{divb}) has a unique global classical solution for every initial data $(u_0, b_0)\in H^m$ with $m\ge 2$ (see e.g. \cite{DL},\cite{SeTe}). However, if any one of these parameters is zero, the global regularity issue has not been settled. This paper establishes the global regularity of classical solutions of (\ref{ueq})-(\ref{divb}) with either $\nu_1=0$, $\nu_2=\nu>0$, $\eta_1=\eta>0$ and $\eta_2=0$ or $\nu_1=\nu>0$, $\nu_2=0$, $\eta_1=0$ and $\eta_2=\eta>0$. More precisely, we
have the following theorem.
\begin{theorem} \label{major1}
Consider the 2D MHD equations (\ref{ueq})-(\ref{divb}) with $\nu_1=0$, $\nu_2=\nu>0$, $\eta_1=\eta>0$ and $\eta_2=0$. Assume $u_0\in H^2({\mathbf R}^2)$ and $b_0\in H^2({\mathbf R}^2)$ with $\nabla \cdot u_0=0$ and $\nabla \cdot b_0=0$. Then (\ref{ueq})-(\ref{divb}) with the initial data $(u_0, b_0)$ has a unique global classical solution $(u,b)$.  In addition, $(u, b)$ satisfies
\begin{equation}
(u, b) \in L^\infty([0,\infty); H^2), \qquad \om_y \in L^2([0,\infty); H^1), \qquad j_x \in L^2([0,\infty); H^1),
\end{equation}
where $\omega=\na \times u$ and $j=\na \times b$ represent the vorticity and the current density, respectively.
\end{theorem}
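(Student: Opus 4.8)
The plan is to run the standard scheme for such partially dissipative systems: establish local existence together with a continuation criterion (the solution persists as long as $\|(u,b)(t)\|_{H^2}$ stays finite; local well-posedness in $H^2$ is routine, the partial viscosity and diffusion being more than enough to control the quadratic nonlinearities in a Galerkin/energy argument), and then reduce global regularity to global-in-time a priori bounds, obtained via the cascade $L^2\to H^1\to H^2$. All norms below are over ${\mathbf R}^2$ and in $L^2$ unless indicated. The one analytic tool doing the real work is the anisotropic Ladyzhenskaya-type inequality
\[
\int_{{\mathbf R}^2}|fgh|\,dx\,dy \;\le\; C\,\|f\|^{1/2}\,\|\partial_x f\|^{1/2}\,\|g\|^{1/2}\,\|\partial_y g\|^{1/2}\,\|h\|,
\]
which splits a cross term so that only an $x$-derivative of one factor and a $y$-derivative of another appear --- exactly the derivatives that can be matched against the \emph{partial} dissipations $\nu\|u_y\|^2$ and $\eta\|b_x\|^2$. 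As a first step, dotting (\ref{ueq}) with $u$ and (\ref{beq}) with $b$, integrating, and using $\nabla\cdot u=\nabla\cdot b=0$ (so that the pressure, the transport terms, and the $b\cdot\nabla b$/$b\cdot\nabla u$ pair all drop) gives $\|u(t)\|^2+\|b(t)\|^2+2\nu\int_0^t\|u_y\|^2+2\eta\int_0^t\|b_x\|^2=\|u_0\|^2+\|b_0\|^2$. Since $\partial_x u_1=-\partial_y u_2$ and $\partial_x b_1=-\partial_y b_2$, this already makes $\int_0^\infty(\|\partial_y u_1\|^2+\|\partial_y u_2\|^2+\|\partial_x u_1\|^2)\,dt$ and $\int_0^\infty(\|\partial_x b_1\|^2+\|\partial_x b_2\|^2+\|\partial_y b_2\|^2)\,dt$ finite; the only first-order quantities \emph{not} controlled this way are $\partial_x u_2$ and $\partial_y b_1$, and the whole difficulty will concentrate there.

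For the $H^1$ bound I would pass to the vorticity $\omega=\partial_x u_2-\partial_y u_1$ and current $j=\partial_x b_2-\partial_y b_1$, which satisfy
\[
\omega_t+u\cdot\nabla\omega=\nu\,\omega_{yy}+b\cdot\nabla j,\qquad j_t+u\cdot\nabla j=\eta\,j_{xx}+b\cdot\nabla\omega+R,
\]
where a direct computation simplified by incompressibility gives $R=2\partial_x b_1\,(\partial_x u_2+\partial_y u_1)+2\partial_y u_2\,(\partial_x b_2+\partial_y b_1)$. Testing with $\omega$ and $j$ and adding, the dangerous magnetic coupling cancels, $\int b\cdot\nabla j\,\omega+\int b\cdot\nabla\omega\,j=0$, leaving $\tfrac{1}{2}\tfrac{d}{dt}(\|\omega\|^2+\|j\|^2)+\nu\|\omega_y\|^2+\eta\|j_x\|^2=\int R\,j$. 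On ${\mathbf R}^2$ with $\nabla\cdot u=\nabla\cdot b=0$ one has $\|\nabla u\|=\|\omega\|$, $\|\nabla b\|=\|j\|$ and --- the algebraic point that makes the scheme work --- $\|\omega_y\|^2=\|\nabla^2 u_1\|^2$, $\|j_x\|^2=\|\nabla^2 b_2\|^2$, so the two partial dissipations control \emph{every} second derivative of $u_1$ and of $b_2$, hence (using $\nabla\cdot u=\nabla\cdot b=0$ again) $\partial_{xy}u_2,\partial_{yy}u_2,\partial_{xx}b_1,\partial_{xy}b_1$ as well; only $\partial_{xx}u_2$ and $\partial_{yy}b_1$ remain uncontrolled. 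Everything then reduces to bounding $\int R\,j$ by $\varepsilon(\|\omega_y\|^2+\|j_x\|^2)+g(t)(\|\omega\|^2+\|j\|^2)$ with $g\in L^1(0,\infty)$. For each cubic term of $\int R\,j$ I would apply the anisotropic inequality, putting the two uncontrolled first-order factors (among $\partial_x u_2$, $\partial_y b_1$, $j$) in the $f$- and $g$-slots --- so that after Young's inequality they recombine into the Gronwall quantity $\|\omega\|^2+\|j\|^2$, while the remaining "good" factor sits in the $h$-slot and its square is the integrable coefficient --- and choosing the assignment so that the $x$- and $y$-derivatives it forces are among the controlled second derivatives. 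The one term that resists is $-2\int\partial_y u_2\,(\partial_y b_1)^2$, where the uncontrolled factor $\partial_y b_1$ appears squared and cannot go to the slot calling for a $y$-derivative; for it I would first use $\partial_y u_2=-\partial_x u_1$, integrate by parts in $x$ to reach $4\int u_1\,\partial_y b_1\,\partial_{xy}b_1$, rewrite $\partial_{xy}b_1=\partial_{yy}b_2$, and then apply the anisotropic inequality with the \emph{undifferentiated} $u_1$ (bounded in $L^2$ by the energy identity) in the $g$-slot. Summing, taking $\varepsilon$ small, and applying Gronwall yields $(u,b)\in L^\infty([0,\infty);H^1)$ together with $\int_0^\infty(\|\omega_y\|^2+\|j_x\|^2)\,dt<\infty$.

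The $H^2$ bound I would get by the same scheme one order higher: apply $\nabla$ to the $(\omega,j)$ system, test against $(\nabla\omega,\nabla j)$ --- the top-order magnetic coupling again cancels --- and control the leftover terms, those from $\nabla(u\cdot\nabla\omega)$, $\nabla(u\cdot\nabla j)$ and $\nabla R$, by the same anisotropic-inequality bookkeeping, now feeding in the $H^1$ bound and the integrals $\int_0^\infty(\|\omega_y\|^2+\|j_x\|^2+\|u_y\|^2+\|b_x\|^2)\,dt<\infty$ as Gronwall coefficients and disposing of the few terms with two "bad" second-order factors by integrations by parts of the same flavor as before. Gronwall then gives $(u,b)\in L^\infty([0,\infty);H^2)$ and $\int_0^\infty(\|\nabla\omega_y\|^2+\|\nabla j_x\|^2)\,dt<\infty$, which together with the $H^1$ bound is exactly $\omega_y,j_x\in L^2([0,\infty);H^1)$. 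Feeding the global $H^2$ bound into local existence and the continuation criterion upgrades the local solution to a global one with the stated regularity; uniqueness follows from an $L^2$ energy estimate on the difference of two solutions with the same data, the $L^\infty_t H^2\hookrightarrow L^\infty_t W^{1,p}$ ($p<\infty$) regularity of one of them together with the same anisotropic inequality closing the Gronwall loop.

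The hard part will be the $H^1$ bound. Because the momentum equation is dissipative only in $y$ and the induction equation only in $x$, the first-order quantities $\partial_x u_2$ and $\partial_y b_1$ are invisible to the $L^2$ energy law, while the forcing $R$ in the current equation is cubic in the first derivatives and is built precisely from these bad components together with $j\sim\partial_x b_2-\partial_y b_1$, so $\int R\,j$ sits right at the threshold. Making it absorbable needs, all at once, (i) the curl cancellation that removes the worst magnetic term, (ii) the identities $\|\omega_y\|=\|\nabla^2 u_1\|$ and $\|j_x\|=\|\nabla^2 b_2\|$ so that the partial dissipations reach nearly every second derivative, (iii) the anisotropic inequality with a slot assignment dictated both by which derivative is actually controlled and by the need to pair the two bad factors so they reproduce the Gronwall quantity, and (iv) an integration by parts that trades a bad first-derivative factor for an undifferentiated, $L^2$-bounded velocity component. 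Once this mechanism is in place the $H^2$ estimate --- and then existence and uniqueness --- is a more laborious but essentially mechanical repetition.
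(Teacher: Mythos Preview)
Your proposal is correct and matches the paper's approach: the same anisotropic trilinear inequality (the paper's Lemma~1), the same $(\omega,j)$-based $L^2\to H^1\to H^2$ cascade exploiting the curl cancellation $\int b\cdot\nabla j\,\omega+\int b\cdot\nabla\omega\,j=0$, and the same integration-by-parts device (trading $\partial_x u_1$ for an undifferentiated $u_1$) on the resistant piece of $\int R\,j$. The only cosmetic differences are that the paper keeps $j$ intact when integrating by parts rather than first expanding $j=\partial_x b_2-\partial_y b_1$, builds existence via parabolic $\epsilon\Delta$-regularization rather than local existence plus continuation, and for uniqueness invokes the anisotropic $L^\infty$ bound $\|f\|_\infty\le C(\|f\|_2+\|f_x\|_2+\|f_{yy}\|_2)$ to obtain $\int_0^T(\|\omega\|_\infty+\|j\|_\infty)\,dt<\infty$ instead of running an $L^2$ difference estimate.
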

\noindent A similar global regularity result can also be stated for (\ref{ueq})-(\ref{divb}) with $\nu_1=\nu>0$, $\nu_2=0$, $\eta_1=0$ and $\eta_2=\eta>0$.

\vskip.1in
Attention is also paid to the 2D MHD equations without dissipation but with magnetic diffusion, namely (\ref{ueq})-(\ref{divb}) with $\nu_1=\nu_2=0$ but with $\eta_1=\eta_2=\eta>0$. In this case, we obtain the following global
{\it a priori} bound for $\om=\na\times u$ and  $j=\na \times b$,
$$
\|\om (t) \|^2_2 + \|j(t)\|^2_2 + \eta\,\int_0^t  \|\nabla j(\tau)\|^2_2\,d\tau\le C(\eta) (\|\om(0)\|^2_2 + \|j(0)\|^2_2) \quad\mbox{for $t\ge 0$},
$$
where $C(\eta)$ is a constant depending on $\eta$ only. One consequence of this global bound is the existence of a global $H^1$-weak solution. It is not clear if such weak solutions are unique or can be improved to global classical solutions. However, if we know the velocity field $u$ of a solution obeys
\begin{equation}\label{condd}
\sup_{p\ge 2} \; \frac{1}{\sqrt{p}} \,\int_0^T \|\na u(t)\|_p \; dt < \infty,
\end{equation}
then this solution actually becomes a classical solution on $[0,T]$ and two weak solutions with one of their velocities satisfying this bound must coincide on $[0,T]$. We remark that (\ref{condd}) is weaker than the standard condition $\int_0^T \|\na u(t)\|_\infty \;dt <\infty$ and, as some preliminary evidence shows, is more likely to be proven true for (\ref{ueq})-(\ref{divb}) with $\eta_1=\eta_2=\eta>0$.

\vskip.1in
This work is partially motivated by the recent progress made by Chae \cite{Ch}, Hou and Li \cite{HL} and Danchin and Paicu \cite{DP} on the 2D Boussinesq equations,
\begin{eqnarray}
&&
u_t  + u \cdot\na u =-\nabla p + \nu\, \Delta u + \theta \,e_2,
\label{bou1}  \\
&&
\nabla \cdot u  =0,  \label{bou2} \\
&&
\theta_t + u\cdot\na \theta = \eta\, \Delta \theta, \label{bou3}
\end{eqnarray}
where the 2D vector $u$ represents the velocity field, the scalar $\theta$ the temperature, and $e_2=(0,1)$. Chae \cite{Ch} and Hou and Li \cite{HL} independently established the global regularity of (\ref{bou1})-(\ref{bou2}) with either dissipation or thermal diffusion. Danchin and Paicu \cite{DP}  constructed global solutions of (\ref{bou1})-(\ref{bou2}) with either $\eta=0$ and $\nu \Delta u$ replaced by $\nu\, u_{xx}$ or $\nu=0$ and $\eta \Delta \theta$ by $\eta\, \theta_{xx}$. We remark that the global regularity issue for the 2D MHD equations (\ref{ueq})-(\ref{divb}) is more sophisticated. The equations of $u$ and $b$ in (\ref{ueq})-(\ref{divb}) are both nonlinearly coupled vectors equations and the approaches in \cite{Ch},\cite{DP} and \cite{HL} do not appear to apply. In fact, it is not clear if (\ref{ueq})-(\ref{divb}) with $\eta_1=\eta_2=0$ or (\ref{ueq})-(\ref{divb}) with $\nu_2=\eta_2=0$ has global classical solutions.

\vskip.1in
The rest of this paper is divided into two sections. The second section is devoted to the global regularity of (\ref{ueq})-(\ref{divb}) with either $\nu_1=0$, $\nu_2=\nu>0$, $\eta_1=\eta>0$ and $\eta_2=0$ or $\nu_1=\nu>0$, $\nu_2=0$, $\eta_1=0$ and $\eta_2=\eta>0$. The third section handles (\ref{ueq})-(\ref{divb}) with $\nu_1=\nu_2=0$ and $\eta_1=\eta_2=\eta>0$. Throughout these sections the $L^p$-norm of a function $f$ is denoted by $\|f\|_p$, the $H^s$-norm by $\|f\|_{H^s}$ and the norm in the Sobolev space $W^{s,p}$ by $\|f\|_{W^{s,p}}$.

\vskip.3in
\section{Mixed partial dissipation and magnetic diffusion}

This section proves Theorem \ref{major1} as well as a parallel result for the case when $\nu_1=\nu>0$, $\nu_2=0$, $\eta_1=0$ and $\eta_2=\eta>0$. The proof of Theorem \ref{major1} is achieved through two stages. The first stage establishes a global bound for $\|\om(t)\|_2$ and $\|j(t)\|_2$ while the second obtains a bound for $\|\na \om(t)\|_2$ and $\|\na j(t)\|_2$. The following elementary lemma will play an important role.

\subsection{An elementary lemma}
\setcounter{theorem}{0}
\begin{lemma}  \label{LEMMA}
Assume that $f$, $g$, $g_y$, $h$ and $h_x$ are all in $L^2({\mathbf R}^2)$. Then,
\begin{equation} \label{EST}
 \iint | f \, g\, h|  \;dxdy
\leq C\; \|f\|_2 \; \|g\|_2^{1/2}   \; \|g_y\|_2^{1/2}   \; \|h\|_2^{1/2}
\; \| h_{x}\|_2^{1/2}.
\end{equation}
\end{lemma}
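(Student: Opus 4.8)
The plan is to bound the integral by holding one factor in $L^2$ and estimating the other two factors via a one-dimensional Gagliardo–Nirenberg inequality applied in the appropriate direction, exploiting the anisotropy: $g$ is controlled using its $y$-derivative, $h$ using its $x$-derivative. First I would apply the Cauchy–Schwarz inequality in a mixed way, writing
\[
\iint |f\,g\,h|\,dxdy \le \iint |f|\,|g|\,|h|\,dxdy,
\]
and then, for fixed $y$, use the one-dimensional inequality $\|\phi\|_{L^\infty_x} \le C\,\|\phi\|_{L^2_x}^{1/2}\,\|\phi_x\|_{L^2_x}^{1/2}$. The natural split is to take $\sup$ over $x$ of $h(\cdot,y)$ and $\sup$ over $y$ of $g(x,\cdot)$.

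The key steps, in order, are as follows. Write
\[
\iint |f\,g\,h|\,dxdy \le \int \Bigl(\sup_x |h(x,y)|\Bigr) \int |f(x,y)|\,|g(x,y)|\,dx\,dy.
\]
For the inner $x$-integral apply Cauchy–Schwarz in $x$: $\int |f|\,|g|\,dx \le \|f(\cdot,y)\|_{L^2_x}\,\|g(\cdot,y)\|_{L^2_x}$. For the factor $\sup_x |h(x,y)|$ use the 1D Gagliardo–Nirenberg inequality in $x$: $\sup_x |h(x,y)| \le C\,\|h(\cdot,y)\|_{L^2_x}^{1/2}\,\|h_x(\cdot,y)\|_{L^2_x}^{1/2}$. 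Thus
\[
\iint |f\,g\,h|\,dxdy \le C \int \|f(\cdot,y)\|_{L^2_x}\,\|g(\cdot,y)\|_{L^2_x}\,\|h(\cdot,y)\|_{L^2_x}^{1/2}\,\|h_x(\cdot,y)\|_{L^2_x}^{1/2}\,dy.
\]
Now I still have a $y$-integral and three (or four) $y$-dependent $L^2_x$-norms. The remaining factor $\|g(\cdot,y)\|_{L^2_x}$ should be handled by taking $\sup_y$: apply the 1D Gagliardo–Nirenberg inequality in $y$ to the function $y\mapsto \|g(\cdot,y)\|_{L^2_x}$, giving $\sup_y \|g(\cdot,y)\|_{L^2_x} \le C\,\|g\|_2^{1/2}\,\|g_y\|_2^{1/2}$ (here one uses that $\partial_y \|g(\cdot,y)\|_{L^2_x}$ is controlled by $\|g_y(\cdot,y)\|_{L^2_x}$ via differentiation under the integral sign and Cauchy–Schwarz). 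Pulling this sup out, what remains is
\[
\iint |f\,g\,h|\,dxdy \le C\,\|g\|_2^{1/2}\,\|g_y\|_2^{1/2} \int \|f(\cdot,y)\|_{L^2_x}\,\|h(\cdot,y)\|_{L^2_x}^{1/2}\,\|h_x(\cdot,y)\|_{L^2_x}^{1/2}\,dy,
\]
and a Hölder inequality in $y$ with exponents $(2,4,4)$ closes the estimate: the $\|f\|_{L^2_x}$ factor goes in $L^2_y$ giving $\|f\|_2$, and the two $\|h\|_{L^2_x}^{1/2}$, $\|h_x\|_{L^2_x}^{1/2}$ factors each go in $L^4_y$, giving $\|h\|_2^{1/2}\,\|h_x\|_2^{1/2}$.

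The main obstacle — really the only subtle point — is the justification that $\sup_y \|g(\cdot,y)\|_{L^2_x} \le C\,\|g\|_2^{1/2}\,\|g_y\|_2^{1/2}$, i.e. that one may legitimately apply the 1D calculus inequality to the function $\varphi(y) := \|g(\cdot,y)\|_{L^2_x}$. This requires knowing $\varphi \in L^2_y$ (immediate from $g\in L^2$ and Fubini) and that $\varphi$ is absolutely continuous with $|\varphi'(y)| \le \|g_y(\cdot,y)\|_{L^2_x}$, which follows by writing $\varphi(y)^2 = \int g(x,y)^2\,dx$, differentiating under the integral, and applying Cauchy–Schwarz; since $g, g_y \in L^2$ this is valid for a.e. $y$ and the boundary terms at $\pm\infty$ vanish along a sequence. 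Everything else is a bookkeeping exercise in iterated Hölder inequalities, and the exponents are forced by scaling, so no genuine difficulty arises there.
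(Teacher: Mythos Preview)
Your proposal is correct and follows essentially the same route as the paper's proof: H\"older in $x$ with $f\in L^2_x$, $g\in L^2_x$, $h\in L^\infty_x$, then the one-dimensional Gagliardo--Nirenberg inequality on $h$ in $x$, then H\"older in $y$ with exponents $(2,4,4)$ after pulling out $\sup_y\|g(\cdot,y)\|_{L^2_x}$ and bounding it via the one-dimensional inequality in $y$. The paper spends more effort on the direct computation of $\sup_y\int |g|^2\,dx \le C\|g\|_2\|g_y\|_2$, but this is exactly the step you isolate as the only subtle point and justify by the same differentiation-plus-Cauchy--Schwarz argument.
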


\begin{proof} Applying H\"{o}lder's inequality and the elementary inequality
\begin{equation} \label{ele}
\sup_{x\in{\mathbf R}} |F(x)| \le \sqrt{2}\; \left(\int |F(x)|^2 dx \right)^\frac14\; \left(\int |F_x(x)|^2 dx \right)^\frac14,
\end{equation}
we have
\begin{eqnarray}
&&\hskip-.3in
 \iint | f \, g\, h|  \;dxdy \nonumber \\
&&
\leq C \int \left[ \left( \int |f|^2 \; dx \right)^{1/2}
\left( \int |g|^2 \; dx \right)^{1/2} \left( \sup_{-\infty < x < \infty} h \right) \right]   \;dy  \nonumber\\
&&
\leq C \int \left[ \left( \int |f|^2 \; dx \right)^{1/2}
\left( \int |g|^2 \; dx \right)^{1/2}  \left( \int |h|^2 \; dx \right)^{1/4}
\left( \int |h_x|^2 \; dx \right)^{1/4}  \right]   \;dy \nonumber\\
&&
\leq C\; \|f\|_2  \; \left( \sup_{-\infty < y < \infty}
\left( \int |g|^2 \; dx \right)^{1/2} \right)\; \|h\|_2^\frac12\;
\|h_x\|_2^\frac12. \label{e1}
\end{eqnarray}
In addition, by (\ref{ele}) again,
\begin{eqnarray*}
&&\hskip-.3in
\sup_{-\infty < y < \infty}
\left( \int |g|^2 \; dx \right)^4 \\
&&
\leq C \left[ \int \left( \int |g|^2 \; dx   \right)^2 \; dy \right] \;
\left[ \int \left( \int |g| \, |g_y| \; dx \right)^2 \; dy \right] \\
&&
\leq C  \left( \int \left( \int |g|^4 \; dy   \right)^{1/2} \; dx\right)^2
\;  \int  \left[ \left(\int |g|^2 \; dx \right)
\left(\int |g_y|^2 \; dx \right) \right]\; dy  \\
&&
\leq C  \left( \int \left[ \left( \int |g|^2 \; dy   \right)^{3/4} \left( \int |g_y|^2 \; dy   \right)^{1/4} \right]
\; dx\right)^2 \\
&&  \hskip.3in   \times \left(\sup_{-\infty < y < \infty}\int |g|^2 \;dx \right)\;  \left( \iint |g_y|^2 \; dx dy \right) \\
&&
\leq C   \|g\|_2^{3} \; \|g_y\|_2
\;\left(\sup_{-\infty < y < \infty}\int |g|^2 \;dx \right)\; \|g_y\|_2^2.
\end{eqnarray*}
That is,
\begin{eqnarray}
&&
\sup_{-\infty < y < \infty}
\int |g|^2 \; dx
\leq C \;  \|g\|_2 \; \|g_y\|_2. \label{e2}
\end{eqnarray}
Combining (\ref{e1}) and (\ref{e2}) yields (\ref{EST}). This completes the proof of Lemma \ref{LEMMA}.
\end{proof}

\subsection{{\it A priori} bounds for $\|\om\|_2$ and $\|j\|_2$}

This subsection establishes {\it a priori} bounds for $\|\om\|_2$ and $\|j\|_2$ as stated in the following proposition.
\begin{proposition} \label{prop21}
If $(u,b)$ solves (\ref{ueq})-(\ref{divb}) with $\nu_1=0$, $\nu_2=\nu>0$, $\eta_1=\eta>0$ and $\eta_2=0$, then the vorticity $\om=\na\times u$ and the current density $j=\na \times b$ satisfy
\begin{equation}\label{omj}
\|\om(t)\|_2^2 + \|j(t)\|_2^2 + \nu\; \int_0^t \|\om_y (\tau)\|_2^2 \;d\tau + \eta\; \int_0^t \|j_x(\tau)\|_2^2 \; d\tau \le C (\nu,\eta)\; \left(\|\om_0\|_2^2 + \|j_0\|_2^2 \right)
\end{equation}
where $C(\nu,\eta)$ denotes a constant depending on $\nu$ and $\eta$ only, $\om_0=\nabla\times u_0$ and $j_0=\na \times b_0$.
\end{proposition}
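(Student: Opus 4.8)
The plan is to work with the vorticity and current-density formulations of the system and run an $L^2$ energy estimate. Taking the curl of (\ref{ueq}) and (\ref{beq}) with the prescribed choice of coefficients $\nu_1=0$, $\nu_2=\nu$, $\eta_1=\eta$, $\eta_2=0$, one obtains scalar equations of the form
\begin{eqnarray*}
&& \om_t + u\cdot\na\om = \nu\,\om_{yy} + b\cdot\na j, \\
&& j_t + u\cdot\na j = \eta\, j_{xx} + b\cdot\na\om + (\text{lower-order terms}),
\end{eqnarray*}
where the extra terms in the $j$-equation come from the commutator $\na\times(b\cdot\na u) - b\cdot\na(\na\times u)$ and in 2D reduce to expressions quadratic in first derivatives of $u$ and $b$ (typically written as combinations like $2\pp_x b_1(\pp_x u_2 + \pp_y u_1)$ and so on). I would first record these two equations carefully, keeping track of every lower-order term, since their precise structure is what the anisotropic Lemma \ref{LEMMA} is designed to handle.

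Next I would multiply the $\om$-equation by $\om$ and the $j$-equation by $j$, integrate over ${\mathbf R}^2$, and add. The transport terms $\iint (u\cdot\na\om)\om$ and $\iint (u\cdot\na j)j$ vanish by $\na\cdot u=0$. The dissipation terms produce $-\nu\|\om_y\|_2^2$ and $-\eta\|j_x\|_2^2$, which are exactly the good terms appearing on the left side of (\ref{omj}). The two coupling terms $\iint (b\cdot\na j)\om$ and $\iint (b\cdot\na\om)j$ would, after an integration by parts using $\na\cdot b=0$, cancel against each other — this is the usual MHD cancellation — so the entire burden falls on the commutator terms in the $j$-equation. Thus the crux is to bound a finite sum of integrals of the schematic form $\iint |\na b|\,|\na u|\,|j|\,dxdy$ (and similar with $\om$ in place of $j$) by something absorbable into $\nu\|\om_y\|_2^2 + \eta\|j_x\|_2^2$ plus a constant times $(\|\om\|_2^2+\|j\|_2^2)$ times an integrable-in-time factor.

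The main obstacle is precisely this anisotropic control: we only have dissipation in $u_{yy}$ and in $b_{xx}$, so we may only spend derivatives in $y$ on $\om$ and in $x$ on $j$. This is where Lemma \ref{LEMMA} enters. For a typical term I would assign the roles so that the factor differentiated in $y$ lands on a piece of $\om$ and the factor differentiated in $x$ lands on a piece of $j$; for instance a term like $\iint |\pp_x b|\,|\pp_y u|\,|\cdot|$ should be estimated by taking $f$ to be the $\pp_y u$-type factor (controlled by $\|\om\|_2$-ish quantities after using Biot–Savart / Riesz bounds $\|\na u\|_2 \lesssim \|\om\|_2$), $g$ the current-type factor with $g_y$ available, and $h$ the vorticity-type factor with $h_x$ available — or whatever pairing makes the two half-derivatives come out as $\|\om_y\|_2^{1/2}$ and $\|j_x\|_2^{1/2}$. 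Lemma \ref{LEMMA} then gives a bound of the form $C\,\|\om\|_2\,\|j\|_2^{1/2}\|j_x\|_2^{1/2}\|\om\|_2^{1/2}\|\om_y\|_2^{1/2}$ (up to relabeling), and Young's inequality with exponents chosen to isolate $\|\om_y\|_2^{1/2}\|j_x\|_2^{1/2}$ at power $2$ absorbs the dissipative factors at the cost of $C(\nu,\eta)(\|\om\|_2^2+\|j\|_2^2)^2$ or $C(\nu,\eta)(\|\om\|_2^2+\|j\|_2^2)(\|\om\|_2^2+\|j\|_2^2)$. One then needs the prefactor multiplying $(\|\om\|_2^2+\|j\|_2^2)$ to be integrable in time; this should follow because the genuinely quadratic-in-$(\om,j)$ part of each term, after Young, leaves behind lower powers that are controlled by the basic energy identity for (\ref{ueq})-(\ref{divb}) (namely $\|u(t)\|_2^2+\|b(t)\|_2^2 \le \|u_0\|_2^2+\|b_0\|_2^2$) together with the $\|\om_y\|_2^2$ and $\|j_x\|_2^2$ integrals already on the left. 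Once every term is bounded this way, I would collect everything into a differential inequality
\[
\frac{d}{dt}\big(\|\om\|_2^2+\|j\|_2^2\big) + \nu\|\om_y\|_2^2 + \eta\|j_x\|_2^2 \le C(\nu,\eta)\,\Phi(t)\,\big(\|\om\|_2^2+\|j\|_2^2\big),
\]
with $\int_0^t\Phi\,d\tau$ finite, and conclude (\ref{omj}) by Gronwall's inequality. The delicate bookkeeping — making sure each of the several commutator terms admits a pairing compatible with the available anisotropic derivatives, and that no term forces an $x$-derivative onto $\om$ or a $y$-derivative onto $j$ — is the part I expect to require the most care.
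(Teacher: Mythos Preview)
Your overall architecture is exactly the paper's: derive the $\om$- and $j$-equations, take $L^2$ inner products, use the MHD cancellation, estimate the four commutator terms via Lemma \ref{LEMMA}, absorb with Young, and close by Gronwall. However, the way you describe closing the Gronwall loop would not actually work, and this is the one genuine gap.

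Your sample application of Lemma \ref{LEMMA} followed by Young produces a right-hand side of size $C(\|\om\|_2^2+\|j\|_2^2)^2 = CX(t)^2$. A differential inequality $X' \le CX^2$ is Riccati-type and does not give a global bound; equivalently, taking $\Phi(t)=X(t)$ as the Gronwall prefactor is circular. The fix, which the paper carries out, is to assign the roles in Lemma \ref{LEMMA} so that the factor \emph{left over} after absorbing $\|\om_y\|_2$ and $\|j_x\|_2$ sits at the $L^2$-energy level, not the $H^1$-level. Concretely, one first records the full basic energy identity
\[
\|u(t)\|_2^2+\|b(t)\|_2^2 + 2\nu\int_0^t\|u_y\|_2^2\,d\tau + 2\eta\int_0^t\|b_x\|_2^2\,d\tau \le \|u_0\|_2^2+\|b_0\|_2^2,
\]
so that $\int_0^t(\|\pp_y u_1\|_2^2+\|\pp_x b_1\|_2^2)\,d\tau$ is finite \emph{a priori}. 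Then, for each commutator term, one chooses $f,g,h$ in Lemma \ref{LEMMA} so that the half-derivatives $g_y$, $h_x$ produce quantities bounded by $\|\om_y\|_2$ and $\|j_x\|_2$ (using $\|\na u_y\|_2\le\|\om_y\|_2$, $\|\na b_x\|_2\le\|j_x\|_2$), while the residual factor after Young is $\|\pp_y u_1\|_2^2$, $\|\pp_x b_1\|_2^2$, or $\|u_1\|_2^2\|\pp_y u_1\|_2^2$. For the terms $\int \pp_x u_1\,\pp_x b_2\,j$ and $\int \pp_x u_1\,\pp_y b_1\,j$ this is not possible directly, and the paper first integrates by parts in $x$ to replace $\pp_x u_1$ by $u_1$ before invoking the lemma; this step is essential and is the ``delicate bookkeeping'' you anticipated. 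With these choices one obtains $X'+\nu\|\om_y\|_2^2+\eta\|j_x\|_2^2 \le C(\|\pp_y u_1\|_2^2+\|\pp_x b_1\|_2^2)(1+\|u\|_2^2)\,X$, and Gronwall closes.
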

\begin{proof}  Taking the inner products of (\ref{ueq}) with $u$ and (\ref{beq}) with $b$, adding the results and integrating by parts, we obtain
\begin{equation} \label{ub}
\|u(t)\|_2^2 + \|b(t)\|_2^2 + 2 \nu \int_0^t \|u_y(\tau)\|_2^2\,d\tau + 2 \eta \int_0^t \|b_x(\tau)\|_2^2 \,d\tau\, \le \|u_0\|_2^2 + \|b_0\|_2^2.
\end{equation}
Since $\omega$ and $j$ satisfy
\begin{eqnarray}
&& \om_t + u\cdot\nabla \om =\nu\; \om_{yy} +  b\cdot\nabla j  \label{vor1},\\
&& j_t + u\cdot\nabla j = \eta\, j_{xx} + b\cdot\nabla \om + 2\;\pp_xb_1(\pp_x u_2 + \pp_y u_1)-2\; \pp_x u_1 (\pp_x b_2 + \pp_y b_1),  \label{j1}
\end{eqnarray}
we find that $X(t)=\|\om(t)\|_2^2 + \|j(t)\|_2^2$ obeys
\begin{eqnarray*}
&&\hskip-.8in
\frac12\,\frac{d\,X(t)}{dt}  + \nu \, \|\om_y\|^2_2 +\eta\, \|j_x\|_2^2 \le 2\, \left|\int [\pp_xb_1(\pp_x u_2 + \pp_y u_1) \;-\; \pp_x u_1 (\pp_x b_2 + \pp_y b_1)]\;j \; dxdy \right|.
\end{eqnarray*}
Applying Lemma \ref{LEMMA}, we can bound the terms on the right as follows.  $C$'s in these estimates denote either pure constants or constants depending on $\nu$ and $\eta$ only.
\begin{eqnarray*}
&& \hskip-.3in
\int |\pp_x b_1|\; |\pp_x u_2|\; |j|  \;dxdy \\
&&
\leq C \|\pp_x u_2\|_2^{1/2}\; \|\pp_{xy}u_2\|_2^{1/2}   \; \|j\|_2^{1/2} \; \|j_x\|_2^{1/2}
\; \|\pp_x b_1\|_2 \\
&&
\leq \frac{\nu}{4}\, \|\pp_{xy} u_2\|_2^2 + \frac{\eta}{8} \|j_x\|_2^2 + C \|\pp_x u_2\|_2\,\|\pp_x b_1\|^2\|j\|_2 \\
&&
\leq \frac{\nu}{4}\, \|\omega_y\|_2^2 + \frac{\eta}{8} \|j_x\|_2^2 + C \|\omega\|_2\;\|\pp_x b_1\|_2^2\;\|j\|_2 \\
&&
\leq \frac{\nu}{4}\, \|\omega_y\|_2^2 + \frac{\eta}{8} \|j_x\|_2^2 + C\;
\|\pp_x b_1\|_2^2\,X(t).
\end{eqnarray*}

\begin{eqnarray*}
&&\hskip-.3in
\int  |\pp_x b_1|\, |\pp_y u_1|\, |j|\,dxdy \\
&&
\leq C\;\|\pp_x b_1\|_2^\frac12\; \|\pp_{xx} b_1\|_2^\frac12\; \|\pp_y u_1\|_2^\frac12\;\|\pp_{yy} u_1\|_2^\frac12\; \|j\|\\
&&
\leq  \frac{\nu}{4}\,\|\pp_{yy} u_1\|_2^2 + \frac{\eta}{8} \|\pp_{xx} b_1\|_2^2 +C \,\|\pp_x b_1\|\, \|\pp_y u_1\|_2\, \|j\|_2^2 \\
&&
\leq \frac{\nu}{4}\, \|\omega_y\|_2^2 + \frac{\eta}{8} \|j_x\|_2^2 + C\,( \|\pp_x b_1\|_2^2 + \|\pp_y u_1\|_2^2)\, \|j\|_2^2.
\end{eqnarray*}

\begin{eqnarray*}
&&\hskip-.3in
\left|\int  \pp_{x} u_1 \pp_{x}b_2 \,j\,dxdy \right| \\
&&
=\left|\int  (u_1 \pp_{xx} b_2 \,j + u_1\, \pp_x b_2 \,j_x)\;dxdy\right|\\
&&
\leq C\;\|u_1\|_2^\frac12\, \|\pp_y u_1\|_2^\frac12\, \|j\|_2^\frac12\, \|j_x\|_2^\frac12\, \|\pp_{xx} b_2\|_2 + C\;\|u_1\|_2^\frac12\, \|\pp_y u_1\|_2^\frac12\,\|\pp_x b_2\|_2^\frac12\, \|\pp_{xx} b_2\|_2^\frac12\, \|j_x\|_2\\
&&
\le C\;\|u_1\|_2^\frac12\, \|\pp_y u_1\|_2^\frac12\,\|j\|_2^\frac12\, \|j_x\|_2^\frac32 + C\; \|u_1\|_2^\frac12\, \|\pp_y u_1\|_2^\frac12\,\|\pp_x b_2\|_2^\frac12\,\|j_x\|_2^\frac32\\
&&
\le \frac{\eta}{8} \|j_x\|_2^2 + C \|u_1\|_2^2 \;\|\pp_y u_1\|_2^2 \;\|j\|_2^2 + C\,\|u_1\|_2^2\; \|\pp_y u_1\|_2^2\; \|\pp_x b_2\|_2^2 \\
&&
\le \frac{\eta}{8} \|j_x\|_2^2 + C \|u_1\|_2^2 \;\|\pp_y u_1\|_2^2 \;\|j\|_2^2.
\end{eqnarray*}

\begin{eqnarray*}
&&\hskip-.3in
\left|\int \pp_x u_1 \, \pp_y b_1\, j\,dx dy \right| \\
&&
\le \left|\int (u_1 \, \pp_{xy} b_1\, j + u_1 \, \pp_y b_1\, j_x)\; dxdy \right| \\
&&
\le C\;\|u_1\|_2^\frac12\; \|\pp_y u_1\|_2^\frac12\;\|j\|_2^\frac12\; \|j_x\|_2^\frac12 \; \|\pp_{xy} b_1\|_2 + C\;\|u_1\|_2^\frac12\; \|\pp_y u_1\|_2^\frac12\; \|\pp_y b_1\|_2^\frac12\; \|\pp_{xy} b_1\|_2^\frac12\; \|j_x\|_2 \\
&&
\le C\;\|u_1\|_2^\frac12\, \|\pp_y u_1\|_2^\frac12\,\|j\|_2^\frac12\, \|j_x\|_2^\frac32 + C\;\|u_1\|_2^\frac12\, \|\pp_y u_1\|_2^\frac12\, \|\pp_y b_1\|_2^\frac12\, \|j_x\|_2^\frac32   \\
&&
\le \frac{\eta}{8}\; \|j_x\|_2^2 + C\; \|u_1\|_2^2\; \|\pp_y u_1\|_2^2\; \|j\|_2^2 + C\,\|u_1\|_2^2\; \|\pp_y u_1\|_2^2\; \|\pp_y b_1\|_2^2 \\
&&
\le \frac{\eta}{8} \|j_x\|_2^2 + C \|u_1\|_2^2 \;\|\pp_y u_1\|_2^2 \;\|j\|_2^2.
\end{eqnarray*}
Combining these estimates, we have
\begin{eqnarray*}
&&\hskip-.8in
\frac{d\,X(t)}{dt}  + \nu \, \|\om_y\|^2_2 +\eta\, \|j_x\|_2^2 \le C\, (\|\pp_y u_1\|_2^2 + \|\pp_x b_1\|_2^2)\; X(t),
\end{eqnarray*}
which, together with (\ref{ub}), yields (\ref{omj}).
\end{proof}

\vskip.1in
\subsection{{\it A priori} bounds for $\|\na \om\|_2$ and $\|\na j\|_2$}

This subsection provides global {\it  a priori} bounds for $\|\na\om\|_2$ and $\|\na j\|_2$.
\begin{proposition}\label{prop22}
If $(u,b)$ solves (\ref{ueq})-(\ref{divb}) with $\nu_1=0$, $\nu_2=\nu>0$, $\eta_1=\eta>0$ and $\eta_2=0$, then the vorticity $\om$ and the current density $j$ satisfy
\begin{equation}\label{omj2}
\|\na \om(t)\|_2^2 + \|\na j(t)\|_2^2 + \nu\; \int_0^t \|\na\om_y (\tau)\|_2^2 \;d\tau + \eta\; \int_0^t \|\na j_x(\tau)\|_2^2 \; d\tau \le C (\nu,\eta)\; \left(\|\na \om_0\|_2^2 + \|\na j_0\|_2^2 \right)
\end{equation}
where $C(\nu,\eta)$ denotes a constant depending on $\nu$ and $\eta$ only.
\end{proposition}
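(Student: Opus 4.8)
The plan is to carry out an $H^1$-level energy estimate on the vorticity and current equations (\ref{vor1})--(\ref{j1}), closing it with the help of Proposition \ref{prop21} and the basic energy bound (\ref{ub}), in close parallel to the proof of Proposition \ref{prop21} but one derivative higher. Applying $\nabla$ to (\ref{vor1}) and (\ref{j1}), taking the $L^2$ inner products of the resulting equations with $\nabla\omega$ and $\nabla j$, adding, and integrating by parts in the dissipative terms, one obtains for $Y(t):=\|\nabla\omega(t)\|_2^2+\|\nabla j(t)\|_2^2$ the identity
\[
\tfrac12\,\frac{d}{dt}Y+\nu\,\|\nabla\omega_y\|_2^2+\eta\,\|\nabla j_x\|_2^2=\mathcal N ,
\]
where $\mathcal N$ gathers: the advection terms $-\int\nabla(u\cdot\nabla\omega)\cdot\nabla\omega-\int\nabla(u\cdot\nabla j)\cdot\nabla j$; the magnetic coupling terms $\int\nabla(b\cdot\nabla j)\cdot\nabla\omega+\int\nabla(b\cdot\nabla\omega)\cdot\nabla j$, whose top-order pieces cancel because $\nabla\cdot b=0$; and the terms in which $\nabla$ falls on the quadratic expression $2\partial_xb_1(\partial_xu_2+\partial_yu_1)-2\partial_xu_1(\partial_xb_2+\partial_yb_1)$ paired against $\nabla j$. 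The goal is to show $\mathcal N\le \tfrac12(\nu\|\nabla\omega_y\|_2^2+\eta\|\nabla j_x\|_2^2)+C\,A(t)\,Y$ with $A\in L^1([0,\infty))$, after which Grönwall's inequality yields (\ref{omj2}).

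The estimates rely on four auxiliary facts. First, $\nabla\cdot u=0$ and $\nabla\cdot b=0$ let one trade the non-dissipated first derivatives: $\partial_xu_1=-\partial_yu_2$, $\partial_yb_2=-\partial_xb_1$. Second, $u_y=(\partial_yu_1,\partial_yu_2)$ and $b_x=(\partial_xb_1,\partial_xb_2)$ are themselves divergence-free, with curls $\omega_y$ and $j_x$, so by the Biot--Savart law $\|\nabla u_y\|_2\le C\|\omega_y\|_2$ and $\|\nabla b_x\|_2\le C\|j_x\|_2$, and likewise $\|\nabla^2u\|_2\le C\|\nabla\omega\|_2$, $\|\nabla^2b\|_2\le C\|\nabla j\|_2$. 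Third, by Proposition \ref{prop21} and (\ref{ub}) the norms $\|u\|_2,\|b\|_2,\|\omega\|_2,\|j\|_2$ are bounded on $[0,\infty)$. Fourth, again by Proposition \ref{prop21} and (\ref{ub}), the functions $\|u_y\|_2^2,\ \|b_x\|_2^2,\ \|\omega_y\|_2^2,\ \|j_x\|_2^2$ belong to $L^1([0,\infty))$, with total integrals bounded in terms of $\nu,\eta$ and the data.

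Each term of $\mathcal N$ is then handled by the same recipe: use the divergence-free relations and integrate by parts so as to push derivatives off undifferentiated factors and, most importantly, to avoid the two second derivatives that the dissipation does \emph{not} control, namely $\omega_{xx}$ and $j_{yy}$; apply Lemma \ref{LEMMA} (together with its intermediate estimate (\ref{e2})) choosing the three factors so that the $y$-derivative lands on a factor dominated by $\|\nabla\omega_y\|_2$ and the $x$-derivative on one dominated by $\|\nabla j_x\|_2$ — or, when a factor is a second derivative of $u$ or $b$, on one dominated by $\|\omega_y\|_2$, $\|j_x\|_2$ (via the curl identities) or by $\|\nabla\omega\|_2$, $\|\nabla j\|_2=Y^{1/2}$; and finish with Young's inequality, absorbing the top-order pieces into $\nu\|\nabla\omega_y\|_2^2+\eta\|\nabla j_x\|_2^2$ and leaving coefficients of $Y$ that are products of the bounded quantities of the third fact with the $L^1$-functions of the fourth. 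Summing over all terms produces the advertised differential inequality, and hence (\ref{omj2}).

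The main obstacle is this last step, and within it the terms that genuinely feel the anisotropy: those carrying the non-dissipated directional derivatives $\omega_x$ (in the vorticity estimate) and $j_y$ (in the current estimate), together with the accompanying derivatives $\partial_xu_2$ and $\partial_yb_1$, which are only bounded in time and not time-integrable. For such a term one must find the right sequence of integrations by parts, using $\partial_xu_1=-\partial_yu_2$, $\partial_yb_2=-\partial_xb_1$ and the curl identities above, so that after Lemma \ref{LEMMA} no $\omega_{xx}$ or $j_{yy}$ appears and every surviving coefficient is seen to be integrable on $[0,\infty)$ rather than merely bounded; this is a delicate but elementary piece of bookkeeping once the reductions are in place. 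By comparison the coupling terms and the extra stretching terms of (\ref{j1}) are routine: after the $\nabla\cdot b=0$ cancellation they are of the same trilinear type, with one factor a second derivative of $u$ or $b$, and are dispatched by Lemma \ref{LEMMA} and Young's inequality.
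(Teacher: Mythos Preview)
Your proposal is correct and follows essentially the same approach as the paper: differentiate the $(\omega,j)$ system, form the $L^2$ energy for $(\nabla\omega,\nabla j)$, use the $\nabla\cdot b=0$ cancellation of the top-order coupling pieces, and estimate every remaining trilinear term via Lemma~\ref{LEMMA} with the divergence-free substitutions $\partial_xu_1=-\partial_yu_2$, $\partial_yb_2=-\partial_xb_1$ so that only $\nabla\omega_y$ and $\nabla j_x$ appear at top order, then close with Gr\"onwall and Proposition~\ref{prop21}. The paper carries this out term by term (their $I_1$--$I_5$), and in particular your ``main obstacle'' term $\int\partial_xu_1\,\omega_x^2$ is exactly their $I_{11}$, handled precisely as you describe. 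One small overstatement: the Gr\"onwall coefficient $A(t)$ the paper obtains contains a piece $\|j\|_2(\|\omega\|_2+\|j\|_2)$ that is merely bounded, not in $L^1([0,\infty))$, so the argument yields a bound on each finite interval rather than a uniform-in-time one; this does not affect the use of the proposition in the proof of Theorem~\ref{major1}.
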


\begin{proof}
Taking the inner products of (\ref{vor1}) with $\Delta \om$ leads to
\begin{eqnarray*}
&&
\frac12\,\frac{d\,}{dt} \|\na \om\|_2^2 +\; \nu \|\na \om_y\|_2^2 \\
&& \qquad
= -\int \na \om \cdot \na u \cdot \na \om \;dx dy + \int \na \om \cdot \na b \cdot \na j \;dx dy + \int b \cdot \na (\na j)\,\cdot \na \om \;dx dy.
\end{eqnarray*}
Similarly, taking the inner product of (\ref{j1}) with $\Dd j$ yields
\begin{eqnarray*}
&&
\frac12\,\frac{d\,}{dt} \|\na j\|_2^2 +\; \eta \|\na j_x\|_2^2 \\
&& \qquad
= -\int \na j \cdot \na u \cdot \na j \;dxdy + \int \na j \cdot \na b \cdot \na \om \;dxdy + \int b\cdot \na (\na \om)\cdot \na j \;dxdy \\
&& \qquad \quad
+ 2 \int \nabla \left[\pp_x b_1 (\pp_x u_2 + \pp_y u_1) \right] \cdot \nabla j \;dxdy
-2 \int \nabla \left[\pp_x u_1 (\pp_x b_2 + \pp_y b_1) \right] \cdot \nabla j \;dxdy.
\end{eqnarray*}
Adding the above equations and integrating by parts, we find
\begin{eqnarray*}
&&
\frac12\,\frac{d\,}{dt} (\|\na \om\|_2^2 + \|\na j\|_2^2) \;+\; \nu \|\na \om_y\|_2^2  + \eta \|\na j_x\|_2^2
=I_1 + I_2 +I_3 + I_4 + I_5,
\end{eqnarray*}
where
\begin{eqnarray*}
&&
I_1 = -\int \na \om \cdot \na u \cdot \na \om \;dx dy, \\
&&
I_2=-\int \na j \cdot \na u \cdot \na j \; dx dy, \\
&&
I_3= 2\int \na \om \cdot \na b \cdot \na j \;dx dy, \\
&&
I_4 =2 \int \nabla \left[\pp_x b_1 (\pp_x u_2 + \pp_y u_1) \right] \cdot \nabla j \; dx dy, \\
&&
I_5 = -2 \int \nabla \left[\pp_x u_1 (\pp_x b_2 + \pp_y b_1) \right] \cdot \nabla j \; dxdy.
\end{eqnarray*}
To bound $I_1$, we write the integrand explicitly and further divide it into four terms
\begin{eqnarray*}
I_1 &=& \int (\pp_x u_1  \;\om_x^2 + \pp_x u_2\;\om_x\;\om_y  + \pp_y u_1 \; \om_x \;\om_y + \pp_y u_2\; \om_y^2)\; dxdy \\
&=& I_{11} + I_{12} + I_{13} + I_{14}.
\end{eqnarray*}
By the divergence-free condition $\pp_x u_1 + \pp_y u_2 =0$ and Lemma \ref{LEMMA},
\begin{eqnarray*}
I_{11} &=& -\int \pp_y u_2 \;\om_x^2 \;dxdy \\
 &\le & C\; \|\pp_y u_2\|_2^\frac12\; \|\pp_{xy} u_2\|_2^\frac12\; \|\om_x\|_2^\frac12\; \|\om_{xy}\|_2^\frac12\; \|\om_x\| \\
&\le& C \; \|\om\|_2^\frac12\;  \|\om_y\|_2^\frac12\;\|\nabla \om_y\|_2^\frac12\; \|\nabla \om\|^\frac32_2 \\
&\le& \frac{\nu}{10} \;\|\nabla \om_y\|_2^2 +C\; \|\om\|_2^\frac23\; \|\om_y\|_2^\frac23\; \|\na \om\|_2^2.
\end{eqnarray*}
By Lemma \ref{LEMMA},
\begin{eqnarray*}
 I_{12} &\le& C \; \|\pp_x u_2\|_2^\frac12\; \|\pp_{xy} u_2\|_2^\frac12\; \|\pp_{y} \om\|_2^\frac12\; \|\pp_{xy} \om\|_2^\frac12\; \|\om_x\|_2\\
&\le& C\;  \|\om\|_2^\frac12\; \|\om_y\|_2^\frac12\; \|\nabla \om_y\|_2^\frac12\; \|\na \om\|^\frac32_2  \\
&\le&\frac{\nu}{10} \|\nabla \om_y\|_2^2 +C\; \|\om\|_2^\frac23\; \|\om_y\|_2^\frac23\; \|\na \om\|_2^2.
\end{eqnarray*}
$I_{13}$ and $I_{14}$ can be similarly bounded,
\begin{eqnarray*}
I_{13},\;I_{14} \le \frac{\nu}{10} \;\|\nabla \om_y\|_2^2 +C\; \|\om\|_2^\frac23\; \|\om_y\|_2^\frac23\; \|\na \om\|_2^2.
\end{eqnarray*}
$I_2$ and $I_3$ can be bounded by applying Lemma \ref{LEMMA}.
\begin{eqnarray*}
I_2 &\le& C\; \|\nabla u\|_2^\frac12\; \|\nabla u_y\|_2^\frac12\; \|\nabla j\|_2^\frac12\; \|\nabla j_x\|_2^\frac12\; \|\nabla j\|_2 \\
&\le&  C\; \|\om\|_2^\frac12\; \|\om_y\|_2^\frac12\; \|\nabla j\|_2^\frac32\; \|\nabla j_x\|_2^\frac12 \\
&\le&   \frac{\eta}{16}\; \|\nabla j_x\|_2^2 + C\;  \|\om\|_2^\frac23\; \|\om_y\|_2^\frac23\; \|\na j\|_2^2.
\end{eqnarray*}

\begin{eqnarray*}
I_3 &\le&  C\; \|\na b\|_2\; \|\na\om\|_2^\frac12\; \|\na \om_y\|_2^\frac12\;\|\nabla j\|_2^\frac12\; \|\nabla j_x\|_2^\frac12  \\
&\le& C\; \|j\|_2\; \|\na\om\|_2^\frac12\; \|\na \om_y\|_2^\frac12\;\|\nabla j\|_2^\frac12\; \|\nabla j_x\|_2^\frac12 \\
&\le& \frac{\nu}{10} \;\|\nabla \om_y\|_2^2 + \frac{\eta}{16}\; \|\nabla j_x\|_2^2  + C\;\|j\|^2_2\;\|\na\om\|_2\;\|\nabla j\|_2 \\
&\le& \frac{\nu}{10}\; \|\nabla \om_y\|_2^2 + \frac{\eta}{16}\; \|\nabla j_x\|_2^2  + C\;\|j\|^2_2\; (\|\na\om\|_2^2 +\|\nabla j\|_2^2).
\end{eqnarray*}
To bound $I_4$, we split it into two parts:
\begin{eqnarray*}
I_4 &=&  2\int\pp_x[\pp_x b_1 (\pp_x u_2 + \pp_y u_1)] \; j_x \;dxdy
+ 2 \int \pp_y[\pp_x b_1 (\pp_x u_2 + \pp_y u_1) ] \; j_y \;dxdy\\
& \equiv & I_{41} + I_{42}.
\end{eqnarray*}
Integrating by parts in $I_{41}$ and applying Lemma \ref{LEMMA}, we have
\begin{eqnarray*}
 I_{41} &=& -2 \int \pp_x b_1 (\pp_x u_2 + \pp_y u_1) \;  j_{xx}\;dxdy\\
&\le&  C \; \|\pp_x b_1\|_2^\frac12\;  \|\pp_{xx} b_1\|_2^\frac12\;
\|\pp_x u_2\|_2^\frac12\; \|\pp_{xy} u_2\|_2^\frac12\; \|j_{xx}\|_2 \\
&& +\; C\; \|\pp_x b_1\|_2^\frac12\;  \|\pp_{xx} b_1\|_2^\frac12\;
\|\pp_y u_1\|_2^\frac12\; \|\pp_{yy} u_1\|_2^\frac12\; \|j_{xx}\|_2 \\
&\le&  C\;\|j\|_2^\frac12\;  \|\nabla j\|_2^\frac12\;  \|\om\|_2^\frac12\; \|\om_y\|_2^\frac12\; \|\nabla j_x\|_2 \\
&\le& \frac{\eta}{16}\; \|\nabla j_x\|_2^2  + C\; \|\om\|_2 \; \|j\|_2\; \|\na\om\|_2\;  \|\nabla j\|_2\\
&\le&  \frac{\eta}{16} \;\|\nabla j_x\|_2^2  + C\; \|\om\|_2 \; \|j\|_2\; (\|\na\om\|_2^2 +  \|\nabla j\|_2^2).
\end{eqnarray*}
$I_{42}$ can be further decomposed into two parts:
\begin{eqnarray*}
I_{42} &=&  2\,\int \pp_{xy} b_1 (\pp_x u_2 + \pp_y u_1) ] \; j_y \;dxdy + 2 \int \pp_{x} b_1 (\pp_{xy} u_2 + \pp_{yy} u_1) ] \; j_y \;dxdy \\
&\equiv & I_{421} + I_{422}
\end{eqnarray*}
and these two terms can be bounded as follows.
\begin{eqnarray*}
I_{421} &\le & C \; \|\pp_{xy} b_1\|_2\; \|\pp_x u_2\|_2^\frac12\;  \|\pp_{xy} u_2\|_2^\frac12\; \|j_y\|_2^\frac12\; \|j_{xy}\|_2^\frac12\\
&& + \; C\; \|\pp_{xy} b_1\|_2\; \|\pp_y u_1\|_2^\frac12\;  \|\pp_{yy} u_1\|_2^\frac12\; \|j_y\|_2^\frac12\; \|j_{xy}\|_2^\frac12\\
&\le&  C\; \|\om\|_2^\frac12\; \|\om_y\|_2^\frac12\; \|\nabla j\|_2^\frac32\; \|\nabla j_x\|_2^\frac12 \\
&\le&  \frac{\eta}{16} \;\|\nabla j_x\|_2^2  + C\; \|\om\|_2^\frac23 \; \|\om_y\|_2^\frac23\; \|\nabla j\|_2^2
\end{eqnarray*}
\begin{eqnarray*}
I_{422} &\le&  C\; \|\pp_x b_1\|_2^\frac12\;\|\pp_{xy} b_1\|_2^\frac12\; \|\pp_{xy} u_2\|_2\;\|j_y\|_2^\frac12\; \|j_{xy}\|_2^\frac12\\
&& +\; C\; \|\pp_x b_1\|_2^\frac12\;\|\pp_{xy} b_1\|_2^\frac12\; \|\pp_{yy} u_1\|_2\;\|j_y\|_2^\frac12\; \|j_{xy}\|_2^\frac12\\
&\le&  C\; \|j\|_2^\frac12\;\|j_x\|_2^\frac12\; \|\om_y\|_2\;\|j_y\|_2^\frac12\; \|j_{xy}\|_2^\frac12 \\
&\le&  \frac{\eta}{16}\; \|\nabla j_x\|_2^2  + C\; \|j\|_2^\frac23\;\|j_x\|_2^\frac23\; \|\na \om\|_2^\frac43 \|\na j\|_2^\frac23\\
&\le & \frac{\eta}{16} \;\|\nabla j_x\|_2^2  + C\; \|j\|_2^\frac23\;\|j_x\|_2^\frac23\; (\|\na \om\|_2^2 + \|\na j\|_2^2).
\end{eqnarray*}
To bound $I_5$, we first write it into three terms,
\begin{eqnarray*}
I_5 &=&  -2 \int \pp_x \left[\pp_x u_1 (\pp_x b_2 + \pp_y b_1) \right]\;  j_x \;dxdy
-2 \int \pp_y \left[\pp_x u_1 (\pp_x b_2 + \pp_y b_1) \right]\;  j_y \;dxdy \\
&=& 2  \int \pp_x u_1 (\pp_x b_2 + \pp_y b_1) \;  j_{xx} \;dxdy  -2 \int
\pp_{xy} u_1\; (\pp_x b_2 + \pp_y b_1) \;  j_y \;dxdy \\
&&   -2 \int \pp_x u_1 (\pp_{xy} b_2 + \pp_{yy} b_1) \;  j_y \;dxdy \\
&\equiv & I_{51} + I_{52} + I_{53}.
\end{eqnarray*}
We bound these terms as follows.
\begin{eqnarray*}
I_{51} &\le&  C\; \|\pp_x u_1\|_2^\frac12\;\|\pp_{xy} u_1\|_2^\frac12\; \|\pp_{x} b_2\|_2^\frac12\;\|\pp_{xx} b_2\|_2^\frac12\; \|j_{xx}\|_2\\
&& + \;C\; \|\pp_x u_1\|_2^\frac12\;\|\pp_{xy} u_1\|_2^\frac12\; \|\pp_{y} b_1\|_2^\frac12\;\|\pp_{xy} b_1\|_2^\frac12\; \|j_{xx}\|_2\\
&\le & C\; \|\om\|_2^\frac12\; \|\na \om\|_2^\frac12\; \|j\|_2^\frac12\;\|\na j\|_2^\frac12\; \|\na j_x\|_2\\
&\le&  \frac{\eta}{16}\; \|\nabla j_x\|_2^2 + C\; \|\om\|_2\; \|j\|_2\;(\|\na \om\|_2^2 + \|\na j\|_2^2).
\end{eqnarray*}
\begin{eqnarray*}
I_{52} &\le & C\;  \|\pp_{xy} u_1\|_2\; \|\pp_x b_2\|_2^\frac12\;  \|\pp_{xy} b_2\|_2^\frac12\; \|j_y\|_2^\frac12\; \|j_{xy}\|_2^\frac12\\
&& + \;C\; \|\pp_{xy} u_1\|_2\; \|\pp_y b_1\|_2^\frac12\;  \|\pp_{yy} b_1\|_2^\frac12\; \|j_y\|_2^\frac12\; \|j_{xy}\|_2^\frac12\\
&\le&  C\;\|\om_y\|_2^\frac12\; \|\na \om\|_2^\frac12\;\|j\|_2^\frac12\; \|\na j\|_2\;\|\na j_x\|_2^\frac12\\
&\le&  \frac{\eta}{16}\; \|\nabla j_x\|_2^2 + C\;\|\om_y\|_2^\frac23\; \|j\|_2^\frac23\; \|\na \om\|_2^\frac23\; \|\na j\|_2^\frac43\\
&\le&  \frac{\eta}{16}\; \|\nabla j_x\|_2^2 + C\;\|\om_y\|_2^\frac23\; \|j\|_2^\frac23\; (\|\na \om\|_2^2 +  \|\na j\|_2^2).
\end{eqnarray*}
\begin{eqnarray*}
I_{53} &\le&  C\;  \|\pp_x u_1\|_2^\frac12\;\|\pp_{xy} u_1\|_2^\frac12\; \|\pp_{xy} b_2\|_2\;\|j_y\|_2^\frac12\; \|j_{xy}\|_2^\frac12\\
&& + C\; \|\pp_x u_1\|_2^\frac12\;\|\pp_{xy} u_1\|_2^\frac12\; \|\pp_{yy} b_1\|_2\;\|j_y\|_2^\frac12\; \|j_{xy}\|_2^\frac12\\
&\le & C\; \|\om\|_2^\frac12\;\|\om_y\|_2^\frac12\; \|\nabla j\|_2\;\|j_y\|_2^\frac12\; \|j_{xy}\|_2^\frac12 \\
&\le&  \frac{\eta}{16}\; \|\nabla j_x\|_2^2  + C\; \|\om\|_2^\frac23\;\|\om_y\|_2^\frac23\; \|\na j\|_2^2. \\
\end{eqnarray*}
Collecting the above estimates, we finally obtain
\begin{eqnarray*}
&&
\frac{d\,}{dt} (\|\na \om\|_2^2 + \|\na j\|_2^2) \;+\; \nu\; \|\na \om_y\|_2^2  + \eta\; \|\na j_x\|_2^2  \\
&&\qquad
\le C\; ((\|\om_y\|_2^\frac23+\|j_x\|_2^\frac23)\;(\|\om\|_2^\frac23 +\|j\|_2^\frac23)\; + \|j\|_2\;(\|\om\|_2+\|j\|_2))\; (\|\na \om\|_2^2 + \|\na j\|_2^2).
\end{eqnarray*}
Applying the bound from Proposition \ref{prop21}, we find
\begin{eqnarray*}
\|\na \om(t)\|_2^2 + \|\na j(t)\|_2^2 + \nu \int_0^t \|\na \om_y(\tau)\|_2^2\;d\tau
+ \eta  \int_0^t \|\na j_x(\tau)\|_2^2 \;d\tau
\le C(\nu, \eta)\; (\|\na \om_0\|_2^2 + \|\na j_0\|_2^2).
\end{eqnarray*}
This completes the proof of Proposition \ref{prop22}.
\end{proof}

\vskip.1in
\subsection{Proof of Theorem \ref{major1}}

This subsection presents the proof of Theorem \ref{major1}.

\begin{proof}[Proof of Theorem \ref{major1}]
With the {\it a priori} bounds of Propositions \ref{prop21} and \ref{prop22} at our disposal, the proof of this theorem can be achieved through a parabolic  regularization process. Let $\epsilon>0$ be a small parameter and consider a family of solutions $(u_\epsilon, b_\epsilon)$ satisfying the regularized system of equations
\begin{eqnarray}
&& \pp_t u_\epsilon + u_\epsilon \cdot\nabla u_\epsilon =-\nabla p_\epsilon + \nu\, \pp_{yy} u_\epsilon + b_\epsilon\cdot\nabla b_\epsilon + \epsilon\; \Dd u_\epsilon,   \label{ueq_ep}  \\
&& \pp_t b_\epsilon  + u_\epsilon\cdot\nabla b_\epsilon =\eta\; \pp_{xx} b_\epsilon + b_\epsilon\cdot\nabla u_\epsilon + \epsilon\; \Dd b_\epsilon  \label{beq_ep}   \\
&& \nabla\cdot u_\epsilon =0,   \label{divu_ep} \\
&& \nabla \cdot b_\epsilon =0, \, \label{divb_ep} \\
&& u_\epsilon(x,0) = \psi_\epsilon\ast u_0, \qquad  b_\epsilon(x,0) = \psi_\epsilon\ast b_0, \label{in_ep}
\end{eqnarray}
where $\psi_\epsilon(x) = \epsilon^{-2} \psi(x/\epsilon)$ with $\psi$ satisfying
$$
\psi\ge 0,\quad \psi \in C_0^\infty ({\mathbf R}^2)\quad\mbox{and}\quad \|\psi\|_1=1.
$$
Since $u_\epsilon(x,0)$ and $b_\epsilon(x,0)$ are smooth, the standard theory on the 2D viscous MHD equations (see e.g. \cite{SeTe}) guarantees that (\ref{ueq_ep})-(\ref{in_ep}) has a unique global smooth solution $(u_\epsilon, b_\epsilon)$. It is easy to see that $(u_\epsilon, b_\epsilon)$ obeys the {\it a priori} bounds in Propositions \ref{prop21} and \ref{prop22} uniformly in $\epsilon$. The solution $(u, \;b)$ of (\ref{ueq})-(\ref{divb}) is then obtained as a limit of $(u_\epsilon, \;b_\epsilon)$ and obey the bounds in Propositions \ref{prop21} and \ref{prop22}.

\vskip.1in
The uniqueness of the solutions follows from the elementary inequalities (see Lemma 14 of \cite{DP})
$$
\|f\|_\infty \le C\;(\|f\|_2 + \|f_x\|_2 + \|f_{yy}\|_{2}) \quad\mbox{and}\quad
\|f\|_\infty \le C\;(\|f\|_2 + \|f_y\|_2 + \|f_{xx}\|_2).
$$
In fact, applying these inequalities, we have
\begin{eqnarray*}
&& \int_0^t \left(\|\om(\tau)\|_\infty \;+\; \|j(\tau)\|_\infty\right) \;d\tau \\
&& \qquad \le \int_0^t \left(\|\om(\tau)\|_2 \;+\;\|\om_y(\tau)\|_2\; + \;\|\na\om_y(\tau)\|_2 \right) \; d\tau \\
&& \qquad \quad + \; \int_0^t \left(\|j(\tau)\|_2 \;+\; \|j_x(\tau)\|_2 +\; \|\na j_x(\tau)\|_2 \right)\; d\tau <\infty
\end{eqnarray*}
for any $t>0$. It is well-known (see e.g. \cite{CKS}, \cite{Wu}) that this bound yields the uniqueness.
\end{proof}

\subsection{(\ref{ueq})-(\ref{divb}) with $\nu_1=\nu>0$, $\nu_2=0$, $\eta_1=0$ and $\eta_2=\eta>0$} A global regularity result similar to Theorem \ref{major1} can be established for the 2D MHD equations (\ref{ueq})-(\ref{divb}) with $\nu_1=\nu>0$, $\nu_2=0$, $\eta_1=0$ and $\eta_2=\eta>0$.
\begin{theorem} \label{major3}
Consider the 2D MHD equations (\ref{ueq})-(\ref{divb}) with $\nu_1=\nu>0$, $\nu_2=0$, $\eta_1=0$ and $\eta_2=\eta>0$. Assume $u_0\in H^2({\mathbf R}^2)$ and $b_0\in H^2({\mathbf R}^2)$ with $\nabla \cdot u_0=0$ and $\nabla \cdot b_0=0$. Then (\ref{ueq})-(\ref{divb}) has a unique global classical solution $(u,b)$.  In addition, $(u, b)$ satisfies
\begin{equation}
(u, b) \in L^\infty([0,\infty); H^2), \qquad \om_x \in L^2([0,\infty); H^1), \qquad j_y \in L^2([0,\infty); H^1),
\end{equation}
where $\omega=\na \times u$ and $j=\na \times b$ represent the vorticity and the current density, respectively.
\end{theorem}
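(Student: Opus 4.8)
The plan is to derive Theorem \ref{major3} from Theorem \ref{major1} by exploiting the reflection symmetry of (\ref{ueq})-(\ref{divb}) that simultaneously interchanges the two spatial variables and the two components of every vector field. Concretely, given a solution $(u,b,p)$ of (\ref{ueq})-(\ref{divb}) with parameters $(\nu_1,\nu_2,\eta_1,\eta_2)$, set
$$
\tilde u(x,y,t)=\bigl(u_2(y,x,t),\,u_1(y,x,t)\bigr),\quad \tilde b(x,y,t)=\bigl(b_2(y,x,t),\,b_1(y,x,t)\bigr),\quad \tilde p(x,y,t)=p(y,x,t).
$$
The first step is to check, by substituting directly into (\ref{ueq})-(\ref{divb}), that $(\tilde u,\tilde b,\tilde p)$ solves the same system with $(\nu_1,\nu_2,\eta_1,\eta_2)$ replaced by $(\nu_2,\nu_1,\eta_2,\eta_1)$: the constraints $\nabla\cdot\tilde u=\nabla\cdot\tilde b=0$ and the convective and stretching terms $u\cdot\nabla u$, $b\cdot\nabla b$, $u\cdot\nabla b$, $b\cdot\nabla u$ are invariant under the interchange of $(x,u_1,b_1)$ with $(y,u_2,b_2)$, while the partial dissipation terms $\nu_1 u_{xx}+\nu_2 u_{yy}$ and $\eta_1 b_{xx}+\eta_2 b_{yy}$ merely have their two coefficients exchanged. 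Thus this map carries solutions of the system in Theorem \ref{major1} ($\nu_1=0$, $\nu_2=\nu$, $\eta_1=\eta$, $\eta_2=0$) precisely to solutions of the system in Theorem \ref{major3} ($\nu_1=\nu$, $\nu_2=0$, $\eta_1=0$, $\eta_2=\eta$), and conversely.

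Second, I would verify that the reflection is compatible with the function-space framework and with the construction used to prove Theorem \ref{major1}. The map $f(x,y)\mapsto f(y,x)$ is an isometry of $L^2$, $H^2$ and $W^{s,p}$, so the hypothesis $u_0,b_0\in H^2({\mathbf R}^2)$ with $\nabla\cdot u_0=\nabla\cdot b_0=0$ is preserved; the mollifier $\psi_\epsilon\ast(\cdot)$ and the artificial viscosity $\epsilon\,\Delta$ in (\ref{ueq_ep})-(\ref{in_ep}) commute with the reflection (choosing, as we may, $\psi$ invariant under $(x,y)\mapsto(y,x)$), so the parabolic regularization argument transfers word for word; and the uniqueness criterion $\int_0^t(\|\omega\|_\infty+\|j\|_\infty)\,d\tau<\infty$ is itself reflection-invariant. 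Finally, under the reflection $\omega$ and $j$ change sign while $\partial_x$ and $\partial_y$ are interchanged, so $\tilde\omega(x,y,t)=-\omega(y,x,t)$ and $\tilde j(x,y,t)=-j(y,x,t)$; consequently the conclusion of Theorem \ref{major1}, namely $(u,b)\in L^\infty([0,\infty);H^2)$, $\omega_y\in L^2([0,\infty);H^1)$, $j_x\in L^2([0,\infty);H^1)$, is transported to $(\tilde u,\tilde b)\in L^\infty([0,\infty);H^2)$, $\tilde\omega_x\in L^2([0,\infty);H^1)$, $\tilde j_y\in L^2([0,\infty);H^1)$, which is exactly the assertion of Theorem \ref{major3}.

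An equivalent route, if one prefers not to invoke the symmetry explicitly, is to repeat the arguments leading to Propositions \ref{prop21}--\ref{prop22} and Theorem \ref{major1} with the roles of $x$ and $y$ interchanged throughout: replace Lemma \ref{LEMMA} by its mirror form
$$
\iint|f\,g\,h|\,dxdy\le C\,\|f\|_2\,\|g\|_2^{1/2}\,\|g_x\|_2^{1/2}\,\|h\|_2^{1/2}\,\|h_y\|_2^{1/2},
$$
proved in exactly the same way; use the basic energy identity in the form $\|u(t)\|_2^2+\|b(t)\|_2^2+2\nu\int_0^t\|u_x\|_2^2\,d\tau+2\eta\int_0^t\|b_y\|_2^2\,d\tau\le\|u_0\|_2^2+\|b_0\|_2^2$; and run the $\|\omega\|_2,\|j\|_2$ and $\|\nabla\omega\|_2,\|\nabla j\|_2$ estimates with $\omega_y,j_x$ replaced by $\omega_x,j_y$. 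The quadratic terms in the current equation, $2\,\partial_x b_1(\partial_x u_2+\partial_y u_1)-2\,\partial_x u_1(\partial_x b_2+\partial_y b_1)$, do not depend on the choice of dissipation, so the same algebraic structure is available. The only point that deserves attention in this direct route — and the natural place an error might enter — is that, when Young's inequality is used to absorb the top-order factors into $\nu\|\omega_x\|_2^2$ and $\eta\|j_y\|_2^2$, the residual low-order factor must be one of $\|\partial_x u_i\|_2$ or $\|\partial_y b_i\|_2$, i.e.\ precisely a quantity whose time integral is furnished by the ($x$/$y$-swapped) basic energy estimate; the divergence-free relations $\partial_x u_1+\partial_y u_2=0$ and $\partial_x b_1+\partial_y b_2=0$ are what make this bookkeeping work. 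Since the reflection is an exact equivalence of the two problems, no genuinely new difficulty arises, and Theorem \ref{major3} is in essence a corollary of Theorem \ref{major1}.
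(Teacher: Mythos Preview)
Your proposal is correct and is essentially the paper's own proof: the authors define exactly the same reflection $U_1(x,y,t)=u_2(y,x,t)$, $U_2(x,y,t)=u_1(y,x,t)$, $B_i$ analogously, verify that $(U,B)$ solves the system treated in Theorem~\ref{major1}, and then read off Theorem~\ref{major3} as an immediate consequence. Your discussion is more detailed (the paper does not spell out the compatibility with the regularization, the sign change in $\omega$ and $j$, or the mirrored direct route), but the core argument is identical.
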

\begin{proof}
Although this theorem can be proven in a similar fashion as that of Theorem \ref{major1}, we provide an alternative proof. The idea is to convert (\ref{ueq})-(\ref{divb}) with $\nu_1=\nu>0$, $\nu_2=0$, $\eta_1=0$ and $\eta_2=\eta>0$ into a form dealt with by Theorem \ref{major1}. Assume that
$(u,b)$ solves (\ref{ueq})-(\ref{divb}) with $\nu_1=\nu>0$, $\nu_2=0$, $\eta_1=0$ and $\eta_2=\eta>0$. Set
\begin{eqnarray*}
&& U_1(x,y,t) = u_2(y,x,t), \quad U_2(x,y,t)=u_1(y,x,t),\quad P(x,y,t) = p(y,x,t), \\ && B_1(x,y,t) =b_2(y,x,t), \quad B_2(x,y,t) =b_1(y,x,t).
\end{eqnarray*}
Then $U=(U_1,U_2)$, $P$ and $B=(B_1,B_2)$ satisfy
\begin{eqnarray}
&& U_t + U\cdot\nabla U =-\nabla P + \nu\, U_{yy} + B\cdot\nabla B,   \label{ueq3}  \\
&& B_t + U\cdot\nabla B =\eta\; B_{xx}  + B\cdot\nabla U,  \label{beq3}   \\
&& \nabla\cdot U =0   \label{divu3}, \\
&& \nabla \cdot B=0. \, \label{divb3}
\end{eqnarray}
The global regularity of (\ref{ueq3})-(\ref{divb3}) guaranteed by Theorem \ref{major1} allows us to obtain the global regularity for (\ref{ueq})-(\ref{divb}) with $\nu_1=\nu>0$, $\nu_2=0$, $\eta_1=0$ and $\eta_2=\eta>0$. This completes the proof of Theorem \ref{major3}.
\end{proof}

\vskip.3in
\section{The MHD with magnetic diffusion}

This section focuses on (\ref{ueq})-(\ref{divb}) with $\nu_1=\nu_2=0$ and $\eta_1=\eta_2=\eta>0$. Two major results are established. The first is the global existence of a weak solution and the second assesses the global regularity and uniqueness of the weak solution under a suitable condition.
\begin{theorem} \label{major2}
Consider (\ref{ueq})-(\ref{divb}) with $\nu_1=\nu_2=0$ and $\eta_1=\eta_2=\eta>0$. Assume that $(u_0, b_0)\in H^1$ with $\na\cdot u_0=0$ and $\na \cdot b_0=0$. Then (\ref{ueq})-(\ref{divb}) has a global weak solution $(u,b)$ satisfying
\begin{equation}\label{regclass}
u\in C([0,\infty); H^1), \quad b\in C([0,\infty); H^1) \cap L^2([0,\infty); H^2).
\end{equation}
\end{theorem}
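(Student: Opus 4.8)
The plan is to run the parabolic regularization already used in the proof of Theorem \ref{major1}. For $\epsilon>0$ let $(u_\epsilon,b_\epsilon)$ be the global smooth solution of the system obtained from (\ref{ueq})--(\ref{divb}) (with $\nu_1=\nu_2=0$, $\eta_1=\eta_2=\eta$) by adding $\epsilon\,\Dd u_\epsilon$ to the $u$--equation and mollifying the data by $\psi_\epsilon$, exactly as in (\ref{ueq_ep})--(\ref{in_ep}); existence and smoothness for fixed $\epsilon$ are classical (see e.g. \cite{SeTe}). Everything then rests on bounds that are uniform in $\epsilon$.

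First, the energy identity gives $\|u_\epsilon(t)\|_2^2+\|b_\epsilon(t)\|_2^2+2\epsilon\int_0^t\|\na u_\epsilon\|_2^2+2\eta\int_0^t\|\na b_\epsilon\|_2^2\le\|u_0\|_2^2+\|b_0\|_2^2$, hence $\int_0^\infty\|j_\epsilon\|_2^2\,d\tau\le(2\eta)^{-1}(\|u_0\|_2^2+\|b_0\|_2^2)$. Next, the vorticity/current equations for $(u_\epsilon,b_\epsilon)$ differ from (\ref{vor1})--(\ref{j1}) only by the good-sign terms $\epsilon\Dd\om_\epsilon$, $\epsilon\Dd j_\epsilon$ and by carrying $\eta\Dd j_\epsilon$ in place of a single partial; testing them with $\om_\epsilon$ and $j_\epsilon$ and using the cancellation $\int b_\epsilon\cdot\na j_\epsilon\,\om_\epsilon+\int b_\epsilon\cdot\na\om_\epsilon\,j_\epsilon=0$ reduces matters to bounding $\int|\na b_\epsilon|\,|\na u_\epsilon|\,|j_\epsilon|$, which by H\"older, the Calder\'on--Zygmund estimate $\|\na b_\epsilon\|_4\le C\|j_\epsilon\|_4$, and Ladyzhenskaya's inequality is at most $C\|\om_\epsilon\|_2\|j_\epsilon\|_2\|\na j_\epsilon\|_2\le\frac{\eta}{2}\|\na j_\epsilon\|_2^2+C\eta^{-1}\|j_\epsilon\|_2^2(\|\om_\epsilon\|_2^2+\|j_\epsilon\|_2^2)$. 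Gronwall together with $\int_0^\infty\|j_\epsilon\|_2^2<\infty$ yields, uniformly in $\epsilon$, the {\it a priori} bound announced in the introduction, so $\{u_\epsilon\}$ is bounded in $L^\infty([0,\infty);H^1)$ and $\{b_\epsilon\}$ in $L^\infty([0,\infty);H^1)\cap L^2([0,\infty);H^2)$. Reading $\pp_t u_\epsilon$ and $\pp_t b_\epsilon$ off the equations --- the worst pieces $u_\epsilon\cdot\na u_\epsilon=\na\cdot(u_\epsilon\otimes u_\epsilon)$, $b_\epsilon\cdot\na b_\epsilon$, $u_\epsilon\cdot\na b_\epsilon$, $b_\epsilon\cdot\na u_\epsilon$ being bounded in $L^\infty_tW^{-1,4/3}$ and $\epsilon\Dd u_\epsilon\to0$ in $L^\infty_tH^{-1}$ --- one also gets $\{\pp_tu_\epsilon\}$, $\{\pp_tb_\epsilon\}$ bounded in $L^2_{\mathrm{loc}}([0,\infty);H^{-s})$ for some $s>0$.

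By the Aubin--Lions--Simon compactness lemma applied on balls $B_R$ and a diagonal extraction over $R\to\infty$, a subsequence satisfies $u_\epsilon\to u$, $b_\epsilon\to b$ strongly in $L^2_{\mathrm{loc}}([0,\infty)\times\mathbf R^2)$ (and $b_\epsilon\to b$ also strongly in $L^2_{\mathrm{loc}}([0,\infty);H^1_{\mathrm{loc}})$, using the extra $H^2$ bound), with the weak-$*$ limits lying in the spaces above and $\na\cdot u=\na\cdot b=0$. Testing the weak formulation against divergence-free $\phi\in C_0^\infty$ (which eliminates the pressure), writing every nonlinearity in divergence form such as $b_\epsilon\cdot\na u_\epsilon=\na\cdot(b_\epsilon\otimes u_\epsilon)$, and pairing the strong $L^2_{\mathrm{loc}}$ convergence of one factor against the uniform $L^\infty_tL^4$ bound of the other lets one pass to the limit; the term $\epsilon\Dd u_\epsilon$ vanishes. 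Thus $(u,b)$ is a global weak solution of (\ref{ueq})--(\ref{divb}), and weak lower semicontinuity gives $u\in L^\infty_tH^1$, $b\in L^\infty_tH^1\cap L^2_tH^2$. For the time continuity, $b\in L^2([0,\infty);H^2)$ together with $\pp_tb\in L^2_{\mathrm{loc}}([0,\infty);L^2)$ (from its own equation, using $\|\na b\|_4\le C\|\na b\|_2^{1/2}\|\na^2b\|_2^{1/2}$ to place $u\cdot\na b$ and $b\cdot\na u$ in $L^2_{\mathrm{loc},t}L^2$) yields $b\in C([0,\infty);H^1)$ by the standard parabolic interpolation; the $u$--equation carries no smoothing, so one first obtains only $u\in C_w([0,\infty);H^1)$, then upgrades to strong continuity via the energy identity for $\|\om(t)\|_2^2+\|j(t)\|_2^2$ and the already-established continuity of $t\mapsto\|j(t)\|_2$, which force $t\mapsto\|\om(t)\|_2$ to be continuous and hence $\om\in C([0,\infty);L^2)$, i.e. $u\in C([0,\infty);H^1)$.

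Granting the $H^1$ bound --- which is already recorded in the introduction --- the two delicate points are the limit passage in the magnetic stretching term $b\cdot\na u$, where only the divergence structure $\na\cdot(b\otimes u)$ is available so one must ensure no derivative lands on a merely weakly convergent factor, and the promotion of $u$ from weak to strong $H^1$--continuity, which relies on having the energy identity (not just an inequality) for the pair $(\om,j)$.
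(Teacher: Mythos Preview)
Your approach is essentially the paper's own: parabolic regularization with $\epsilon\Dd u_\epsilon$, the uniform $H^1$ bound obtained from the $(\om,j)$ energy estimate (this is exactly Proposition~\ref{prop3}, which you reproduce), and passage to the limit. The paper's proof of Theorem~\ref{major2} is a single short paragraph that invokes Proposition~\ref{prop3} and then says ``by going through a standard limit process, we conclude\ldots''; you supply the details of that limit process (Aubin--Lions on balls, divergence-form handling of the nonlinearities, the parabolic interpolation for $b\in C_tH^1$, and the energy-identity upgrade of $u$ from $C_wH^1$ to $CH^1$). In particular, the two points you single out as delicate --- the limit in $b\cdot\na u$ and the strong $H^1$--continuity of $u$ --- are not addressed explicitly in the paper at all, so on those your write-up is more careful than the original.
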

The proof of this result relies on a global {\it a priori} bound for $\om=\na\times u$ and $j=\na \times b$.

\begin{theorem} \label{cond_reg}
Assume the initial data $(u_0, b_0)\in H^3$, $\nabla\cdot u_0=0$ and $\na \cdot b_0=0$. Let $(u, b)$ be the corresponding solution of (\ref{ueq})-(\ref{divb}) with $\nu_1=\nu_2=0$ and $\eta_1=\eta_2=\eta>0$.  If, for some $T>0$,
\begin{equation}\label{supp}
\sup_{p\ge 2} \; \frac{1}{\sqrt{p}} \,\int_0^T \|\na u(t)\|_p \; dt < \infty,
\end{equation}
then $(u, b)$ is regular on $[0,T]$, namely
$$
(u, b) \in C([0,T]; H^3).
$$
In addition, two weak solutions $(u,b)$ and $(\tilde{u}, \tilde{b})$ in the regularity class (\ref{regclass}) must be identical on the time interval $[0,T]$ if $u$ satisfies (\ref{supp}).
\end{theorem}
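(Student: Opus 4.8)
I would prove the two assertions separately: first the propagation of $H^3$ regularity under (\ref{supp}), then the uniqueness.

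\smallskip
\noindent\emph{Propagation of regularity.} The input is the global $H^1$ bound on $(\om,j)=(\na\times u,\na\times b)$ obtained in the proof of Theorem \ref{major2}: $\|\om(t)\|_2,\|j(t)\|_2$ are bounded on $[0,T]$ and $\int_0^T\|\na j(\tau)\|_2^2\,d\tau<\infty$. From it I would bootstrap to $H^3$ by energy estimates --- first on $\|\na\om\|_2^2+\|\na j\|_2^2$, then on $\|\na^2\om\|_2^2+\|\na^2 j\|_2^2$ --- extending the locally existing $H^3$ solution to all of $[0,T]$ by a continuation argument. The structural obstruction is that (\ref{ueq}) with $\nu_1=\nu_2=0$ has no dissipation, so $\om$ obeys the transport equation $\om_t+u\cdot\na\om=b\cdot\na j$; differentiating $k$ times and pairing with $\na^k\om$ produces a stretching term $\int\na u\,(\na^k\om)^2$ that one would normally dominate by $\|\na u\|_\infty\|\na^k\om\|_2^2$. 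Since $\|\na u\|_\infty$ is not at hand, the idea is to bound it by $\|\na u\|_p\,\|\na^k\om\|_{2p/(p-1)}^2$, interpolate $\|\na^k\om\|_{2p/(p-1)}$ against $\|\na^k\om\|_2$ using only what is available, integrate in time, and choose $p$ at the end so that the $p$-dependent interpolation constants are absorbed into the $\sqrt p$ budget provided by (\ref{supp}). All the remaining nonlinear terms (those involving the magnetic field) are treated by Gagliardo--Nirenberg inequalities, always keeping the undissipated quantities $\om,\na\om,\na^2\om$ in $L^2$ and extracting the gain from the magnetic dissipation through $\na j$ and $\na^2 j$; this produces a Gronwall inequality with $L^1([0,T])$ coefficients, hence $(u,b)\in L^\infty([0,T];H^3)$, and continuity in time then follows from the equations. (Alternatively, one may first upgrade (\ref{supp}) to $\int_0^T\|\om(\tau)\|_\infty\,d\tau<\infty$ via the transport equation together with $L^1_tW^{1,\infty}$-control of $j$ coming from the heat semigroup, then to $\int_0^T\|\na u(\tau)\|_\infty\,d\tau<\infty$ by a logarithmic Sobolev inequality, after which the $H^3$ propagation is routine.)

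\smallskip
\noindent\emph{Uniqueness: the energy estimate.} Let $(u,b)$ and $(\tilde u,\tilde b)$ be two solutions in the class (\ref{regclass}) with the same initial data, and set $w=u-\tilde u$, $d=b-\tilde b$. I would write the difference equations with the convection and Lorentz/stretching terms split so that the only undifferentiated gradient of $u$ (as opposed to $\tilde u$) sits in the velocity self-interaction:
\[
w_t+\tilde u\cdot\na w+w\cdot\na u=-\na(p-\tilde p)+b\cdot\na d+d\cdot\na\tilde b,\qquad
d_t+u\cdot\na d+w\cdot\na\tilde b=\eta\,\Dd d+b\cdot\na w+d\cdot\na\tilde u,
\]
together with $\na\cdot w=\na\cdot d=0$. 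Pairing the first equation with $w$ and the second with $d$ in $L^2$ and adding, the transport terms $\int\tilde u\cdot\na w\cdot w$ and $\int u\cdot\na d\cdot d$ vanish and $\int b\cdot\na d\cdot w+\int b\cdot\na w\cdot d=\int b\cdot\na(w\cdot d)=0$, leaving
\[
\frac12\frac{d}{dt}\big(\|w\|_2^2+\|d\|_2^2\big)+\eta\|\na d\|_2^2
=-\int w\cdot\na u\cdot w+\int d\cdot\na\tilde u\cdot d+\int d\cdot\na\tilde b\cdot w-\int w\cdot\na\tilde b\cdot d.
\]
The last three terms I would control with (\ref{regclass}) alone: $\big|\int d\cdot\na\tilde u\cdot d\big|\le\|\na\tilde u\|_2\|d\|_4^2\le\frac\eta{16}\|\na d\|_2^2+C\|\na\tilde u\|_2^2\|d\|_2^2$, and each magnetic cross term, say $\int d\cdot\na\tilde b\cdot w$, split by H\"older as $\|\na\tilde b\|_6\|d\|_3\|w\|_2$ and then, by Gagliardo--Nirenberg and Young's inequality, bounded by $\frac\eta{16}\|\na d\|_2^2+C\|\na^2\tilde b\|_2^{4/5}\big(\|w\|_2^2+\|d\|_2^2\big)$; the coefficients $\|\na\tilde u\|_2^2$ and $\|\na^2\tilde b\|_2^{4/5}$ lie in $L^1([0,T])$ by (\ref{regclass}). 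Only the first term cannot use the magnetic dissipation, and this is precisely where (\ref{supp}) enters: with $M:=\sup_{[0,T]}(\|u\|_{H^1}+\|\tilde u\|_{H^1})$,
\[
\Big|\int w\cdot\na u\cdot w\Big|\le\|\na u\|_p\,\|w\|_{2p/(p-1)}^2\le C\|\na u\|_p\,\|w\|_2^{2-2/p}\|\na w\|_2^{2/p}\le CM^{2/p}\|\na u\|_p\,\|w\|_2^{2-2/p}.
\]

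\smallskip
\noindent\emph{Uniqueness: closing, and the main difficulty.} Writing $E(t)=\|w(t)\|_2^2+\|d(t)\|_2^2$ and absorbing the dissipative contributions, we obtain $E'\le K(t)E+CM^{2/p}\|\na u(t)\|_p\,E^{1-1/p}$ with $K\in L^1([0,T])$ and $E(0)=0$. Setting $F=E\exp(-\int_0^tK)$ removes the linear term, and then $\frac{d}{dt}F^{1/p}\le\frac1p CM^{2/p}\|\na u\|_p$, so
\[
F(t)^{1/p}\le\frac{CM^{2/p}}{p}\int_0^T\|\na u(\tau)\|_p\,d\tau\le\frac{C''}{\sqrt p}\qquad(p\ge2),
\]
where $C''$ depends on $M$ and on $\sup_{q\ge2}q^{-1/2}\int_0^T\|\na u(\tau)\|_q\,d\tau$ (finite by (\ref{supp})) but not on $p$. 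Hence $F(t)\le(C''p^{-1/2})^p\to0$ as $p\to\infty$, so $F\equiv0$, i.e. $u\equiv\tilde u$ and $b\equiv\tilde b$ on $[0,T]$. I expect the uniqueness half to be essentially routine once one is careful about which terms are allowed to consume the magnetic dissipation; the main obstacle is the regularity half, where one must verify that every factor of $\na u$ in the $H^2$ and $H^3$ estimates can be controlled through $\|\na u\|_{L^p}$ alone, that the interpolations against the available dissipative control of $\na j$ and $\na^2 j$ are sufficiently lossless, and that the accumulated $p$-dependent constants fit inside the $\sqrt p$ budget of (\ref{supp}) --- which is exactly what makes (\ref{supp}), rather than the stronger $\int_0^T\|\na u\|_\infty\,d\tau<\infty$, the natural hypothesis.
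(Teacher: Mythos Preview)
Your uniqueness argument is correct and takes a genuinely different route from the paper's. The paper bounds the dangerous term $\int W\cdot\nabla u\cdot W$ by $\|\nabla u\|_\infty\|W\|_2^2$ and then invokes its logarithmic Sobolev lemma (Lemma~3: $\|f\|_\infty\le C\sup_{q\ge 2}q^{-1/2}\|f\|_q\,[\ln(e+\|f\|_{H^2})]^{1/2}$) to convert (\ref{supp}) into control of $\|\nabla u\|_\infty$. You instead keep $\|\nabla u\|_p$ for finite $p$, interpolate $\|w\|_{2p/(p-1)}\le C\|w\|_2^{1-1/p}\|\nabla w\|_2^{1/p}$ against the available $H^1$ bound $\|\nabla w\|_2\le M$, solve the resulting sublinear ODE $F'\le C\|\nabla u\|_pF^{1-1/p}$, and let $p\to\infty$. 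Your method is more elementary (no logarithmic inequality needed) and consumes the hypothesis exactly in the form $\sup_p p^{-1/2}\int_0^T\|\nabla u\|_p$, whereas the paper's Osgood argument actually feeds on the pointwise-in-time quantity $\sup_q q^{-1/2}\|\nabla u(t)\|_q$.

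Your primary sketch for the regularity half, however, does not close. The mechanism that makes the uniqueness trick work is the \emph{a priori} ceiling $\|\nabla w\|_2\le M$: it lets $\|w\|_{2p/(p-1)}^2\le CM^{2/p}\|w\|_2^{2-2/p}$ become genuinely sublinear in $E$. At the top order of the $H^3$ energy there is no such ceiling: the stretching term $\int\nabla u\,|\nabla^2\omega|^2\le\|\nabla u\|_p\|\nabla^2\omega\|_{2p/(p-1)}^2$ forces the interpolation $\|\nabla^2\omega\|_{2p/(p-1)}\le C\|\nabla^2\omega\|_2^{1-1/p}\|\nabla^3\omega\|_2^{1/p}$, and $\|\nabla^3\omega\|_2$ is an $H^4$ quantity not in the energy; any alternative interpolation you try will likewise overshoot the available regularity, since $\omega$ has no dissipation to supply extra room. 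The paper handles precisely this term by writing $\|\nabla u\|_\infty\|\nabla^2\omega\|_2^2$ and applying its Lemma~3, obtaining an inequality of the form $Y'\le Cg(t)\,Y\,[\ln(e+Y)]^{1/2}+h(t)Y$ with $h\in L^1$, which closes by an Osgood argument. Your parenthetical alternative (upgrade to $\int_0^T\|\omega\|_\infty$ via the transport equation and heat-semigroup bounds on $j$, then to $\int_0^T\|\nabla u\|_\infty$ by a logarithmic inequality) is closer in spirit to the paper but adds an extra layer: the $L^1_tW^{1,\infty}$ control of $j$ you invoke would itself require bounds on the forcing $b\cdot\nabla\omega-u\cdot\nabla j$ that are not immediately available from the $H^1$ level, so that route is not obviously shorter than the paper's direct use of Lemma~3.
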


The rest of this section is divided into four subsections. The first subsection presents a global {\it a priori} bound for $\|u\|_{H^1}$ and $\|b\|_{H^1}$ and the second proves Theorem \ref{major2}. The third subsection establishes a logarithmic Sobolev inequality, which serves as a preparation for the proof of Theorem \ref{cond_reg}. The last subsection proves Theorem \ref{cond_reg}.

\vskip.1in
\subsection{An {\it a priori} bound for $\|\nabla u\|_2$ and $\|\nabla b\|_2$}

\begin{proposition} \label{prop3}
If $(u,b)$ solves the 2D MHD equations (\ref{ueq})-(\ref{divb}) with $\nu_1=\nu_2=0$ and $\eta_1=\eta_2=\eta>0$, then, for any $t>0$,
\begin{equation} \label{ap}
\|\om(t)\|_2^2 + \|j(t)\|_2^2 + \eta \int_0^t \|\na j\|_2^2 \;d\tau \le C(\eta)\; (\|\na u_0\|_2^2 + \|\na b_0\|_2^2),
\end{equation}
where $C(\eta)$ is a constant depending on $\eta$ only. Therefore,
\begin{equation}\label{h1ap}
\|u(t)\|_{H^1}^2 + \|b(t)\|_{H^1}^2 + \eta \int_0^t \|b\|_{H^2}^2 \;d\tau \le C(\eta)\; (\|u_0\|_{H^1}^2 + \|b_0\|_{H^1}^2).
\end{equation}
\end{proposition}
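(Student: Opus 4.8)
The plan is to work directly with the vorticity and current equations and run a single energy estimate on $Y(t)=\|\om(t)\|_2^2+\|j(t)\|_2^2$. First I would record the scalar equations satisfied by $\om=\na\times u$ and $j=\na\times b$ when $\nu_1=\nu_2=0$ and $\eta_1=\eta_2=\eta$: taking the curl of (\ref{ueq})--(\ref{divb}) gives
\begin{eqnarray*}
&& \om_t + u\cdot\na\om = b\cdot\na j, \\
&& j_t + u\cdot\na j = \eta\,\Dd j + b\cdot\na\om + 2\,\pp_x b_1(\pp_x u_2+\pp_y u_1) - 2\,\pp_x u_1(\pp_x b_2+\pp_y b_1),
\end{eqnarray*}
exactly as in (\ref{vor1})--(\ref{j1}) but now with full Laplacian diffusion on $j$ and none on $\om$. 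Multiplying the first by $\om$, the second by $j$, integrating, and using $\na\cdot u=0$ to kill the transport terms, the magnetic stretching terms $b\cdot\na j$ and $b\cdot\na\om$ cancel against each other after integration by parts (using $\na\cdot b=0$), leaving
$$
\frac12\frac{d}{dt}\big(\|\om\|_2^2+\|j\|_2^2\big) + \eta\|\na j\|_2^2 = 2\!\int\!\big[\pp_x b_1(\pp_x u_2+\pp_y u_1)-\pp_x u_1(\pp_x b_2+\pp_y b_1)\big] j\,dxdy.
$$

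The core of the argument is to absorb the right-hand side into $\eta\|\na j\|_2^2$ plus an integrable multiple of $Y(t)$. Here I would use the 2D Ladyzhenskaya/Gagliardo--Nirenberg inequality $\|f\|_4\le C\|f\|_2^{1/2}\|\na f\|_2^{1/2}$ rather than Lemma \ref{LEMMA}, since now we have the full gradient of $j$ available. A typical term such as $\int \pp_x b_1\,\pp_x u_2\, j\,dxdy$: since $\na u$ is controlled in $L^2$ only by $\|\om\|_2$ (via Calder\'on--Zygmund, $\|\na u\|_2\le C\|\om\|_2$) and similarly $\|\na b\|_2\le C\|j\|_2$, I would integrate by parts to move a derivative off $u$ onto $b_1$ or $j$ so that only one factor carries a derivative that must be paid for by $\|\na j\|_2$; estimating $\|\na b\|_4\le C\|\na b\|_2^{1/2}\|\na j\|_2^{1/2}$ and $\|j\|_4\le C\|j\|_2^{1/2}\|\na j\|_2^{1/2}$ and so on, each term is bounded by $C\|\om\|_2\|j\|_2^{1/2}\|\na j\|_2^{3/2}$ or similar, and Young's inequality yields $\tfrac{\eta}{8}\|\na j\|_2^2 + C\eta^{-3}\|\om\|_2^4\|j\|_2^2$ — a term of the form (integrable-in-time bound)$\cdot Y(t)$ once we know $\|\om\|_2,\|j\|_2$ stay bounded. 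Since that boundedness is exactly what we are proving, the right framing is a Gronwall argument: after absorbing, $\frac{d}{dt}Y + \eta\|\na j\|_2^2 \le C\,\phi(t)\,Y$ where $\phi$ involves only $\|\om\|_2^2$ and $\|j\|_2^2$ (hence $Y$ itself) together with the basic energy bound (\ref{ub}), which here reads $\|u(t)\|_2^2+\|b(t)\|_2^2+2\eta\int_0^t\|\na b\|_2^2\le \|u_0\|_2^2+\|b_0\|_2^2$; one must check that the coefficient multiplying $Y$ is at worst linear in $Y$ with an $L^1_t$ prefactor, so that the differential inequality is of Riccati type $Y'\le C Y^2$ only through quantities already absorbed — this is the delicate bookkeeping step.

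The main obstacle, then, is ensuring the nonlinear terms genuinely close: the dangerous combinations are those where two derivatives fall on velocity components (which have no diffusion), e.g. $\int \pp_{xy}u_1\cdot(\cdots)\cdot(\cdots)$ arising after integration by parts. The trick, as in the proof of Proposition \ref{prop21}, is to always integrate by parts so that derivatives land on $b$ or $j$ (which enjoy the full diffusion), using $\pp_x u_1 = -\pp_y u_2$ from incompressibility to reorganize, and to write $\pp_x u_1\,j_{xx}$-type products back in terms of $u_1$ times second derivatives of $j$ so the second derivative is on $j$. Once the bound $\frac{d}{dt}Y + \eta\|\na j\|_2^2 \le C(\eta)\,\big(\|u_1\|_2^2\|\pp_y u_1\|_2^2 + \cdots\big)Y$ is obtained with an $L^1_t$ coefficient (using (\ref{ub})), Gronwall gives (\ref{ap}). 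Finally, (\ref{h1ap}) follows immediately: $\|u\|_{H^1}^2\le C(\|u\|_2^2+\|\om\|_2^2)$ and $\|b\|_{H^1}^2\le C(\|b\|_2^2+\|j\|_2^2)$ by the Biot--Savart/Calder\'on--Zygmund estimates in ${\mathbf R}^2$, $\|b\|_{H^2}^2\le C(\|b\|_2^2 + \|\na j\|_2^2)$ likewise, and $\|u_0\|_2,\|b_0\|_2$ are absorbed into the $H^1$ norms of the data, so combining (\ref{ap}) with (\ref{ub}) produces (\ref{h1ap}).
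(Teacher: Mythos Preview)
Your setup is right (vorticity/current equations, the cancellation $\int b\cdot\na j\,\om + \int b\cdot\na\om\,j = 0$, and the identity for $\frac{d}{dt}Y$), but the estimate of the trilinear term does not close as you have written it, and the fix is to do \emph{less}, not more. You propose integrating by parts to shift derivatives off $u$ and then arrive at bounds of the form $C\|\om\|_2\|j\|_2^{1/2}\|\na j\|_2^{3/2}$, hence (after Young) a coefficient $C\eta^{-3}\|\om\|_2^4\|j\|_2^2$ in front of $Y$. That gives $Y'\le C\|j\|_2^2\,Y^2$, a genuine Riccati inequality, not a Gronwall one; even though $\int_0^t\|j\|_2^2\,d\tau<\infty$, this does not prevent blow-up of $Y$ unless $Y(0)$ is small. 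Your alternative endpoint, a coefficient like $\|u_1\|_2^2\|\pp_y u_1\|_2^2$, also fails here: in the present case $\nu_1=\nu_2=0$, the basic energy identity gives only $\int_0^t\|\na b\|_2^2\,d\tau<\infty$ and says nothing about $\int_0^t\|\pp_y u_1\|_2^2\,d\tau$. You are importing the bookkeeping from the mixed--dissipation proposition, where $\nu\,u_{yy}$ supplied that time integrability; it is simply not available without viscosity.

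The paper's argument is much shorter: apply H\"older directly in the splitting $L^2\times L^4\times L^4$,
\[
2\int\big[\pp_x b_1(\pp_x u_2+\pp_y u_1)-\pp_x u_1(\pp_x b_2+\pp_y b_1)\big]j
\;\le\; 8\,\|\na u\|_2\,\|\na b\|_4\,\|j\|_4,
\]
and then use $\|\na u\|_2\le\|\om\|_2$, $\|\na b\|_4\le\|j\|_4$, and $\|j\|_4^2\le\|j\|_2\|\na j\|_2$. This yields $8\|\om\|_2\|j\|_2\|\na j\|_2\le \eta\|\na j\|_2^2 + \tfrac{16}{\eta}\|\om\|_2^2\|j\|_2^2$, so
\[
\frac{d}{dt}Y + \eta\|\na j\|_2^2 \le \frac{16}{\eta}\,\|j\|_2^2\,Y.
\]
Now the coefficient $\|j\|_2^2=\|\na b\|_2^2$ is in $L^1_t$ by the basic energy estimate, and Gronwall closes immediately. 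No integration by parts, no anisotropic lemma, and no circularity. Your derivation of (\ref{h1ap}) from (\ref{ap}) and the $L^2$ energy bound is fine.
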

\begin{proof} It follows easily from (\ref{ueq}) and (\ref{beq}) that, for any $t>0$,
\begin{equation}\label{l2}
\|u(t)\|^2_2 + \|b(t)\|_2^2 + 2 \eta \int_0^t \|\nabla b (\tau)\|_2^2 \; d\tau =\|u(0)\|^2_2 + \|b(0)\|_2^2.
\end{equation}
To prove (\ref{ap}), we employ the equations of the vorticity $\om$ and the current density $j$,
\begin{eqnarray}
&&
\om_t + u\cdot\nabla \om = b\cdot\nabla j,  \label{diffv}\\
&&
j_t + u\cdot\nabla j = \eta\; \Delta j + b\cdot\nabla \om  + 2\pp_xb_1(\pp_x u_2 + \pp_y u_1)-2 \pp_x u_1 (\pp_x b_2 + \pp_y b_1).  \label{diffj}
\end{eqnarray}
Taking the inner products of (\ref{diffv}) with $\om$ and of (\ref{diffj}) with $j$, we find
\begin{eqnarray*}
&&
\frac12\,\frac{d\, \|\om\|_2^2}{dt} \,=\,\int b\cdot\nabla j \,\om\,dxdy,\\
&&
\frac12\,\frac{d\, \|j\|_2^2}{dt} + \eta \|\nabla j\|^2_2  = \int b\cdot\nabla \om \,j \,dxdy +\; 2\;\int (\pp_xb_1(\pp_x u_2 + \pp_y u_1)- \pp_x u_1 (\pp_x b_2 + \pp_y b_1))\,j\,dxdy.
\end{eqnarray*}
Since
$$
\int b\cdot\nabla j \;\om\,dxdy + \int b\cdot\nabla \om \;j \,dxdy =0,
$$
we have, for $X(t)= \|\om(t)\|_2^2 + \|j(t)\|_2^2$,
\begin{equation*}
\frac{d\,X(t)}{dt}  + 2\eta \, \|\nabla j\|^2_2 \le 8 \|\na u\|_2\,\|\na b\|_4 \,\|j\|_4,
\end{equation*}
where we have applied the H\"{o}lder inequality. Applying the inequalities
\begin{equation*}
\|\nabla u\|_2 \le \|\om\|_2, \quad \|\nabla b\|_4 \le \|j\|_4, \quad \|j\|^2_4 \le \|j\|_2\, \|\nabla j\|_2
\end{equation*}
and Young's inequality, we find
\begin{equation*}
\frac{d\,X(t)}{dt}  + 2\eta \, \|\nabla j\|^2_2 \le \frac{16}{\eta} \|\omega\|^2_2\;\|j\|_2^2 \, +\eta \;\|\nabla j\|^2_2.
\end{equation*}
In particular,
\begin{eqnarray*}
&&\hskip-.8in
\frac{d\,X(t)}{dt}  + \eta \, \|\nabla j\|^2_2 \le \frac{16}{\eta}\|j\|_2^2\,\,  X(t).
\end{eqnarray*}
By Gronwall's inequality,
\begin{eqnarray*}
X(t) + \eta\,\int_0^t  \|\nabla j(\tau)\|^2_2\,d\tau \le X(0)\, \exp\left(\frac{16}{\eta}\int_0^t \|j\|_2^2 \; d\tau \right),
\end{eqnarray*}
which, together with (\ref{l2}), yields (\ref{ap}) and (\ref{h1ap}).
\end{proof}

\vskip.1in
\subsection{Proof of Theorem \ref{major2}} Let $\epsilon>0$ be a small parameter and consider the regularized system of equations
\begin{eqnarray*}
&& \pp_t\; u_\epsilon + u_\epsilon\cdot\nabla u_\epsilon =-\nabla p_\epsilon + \epsilon \; \Dd u_\epsilon + b_\epsilon\cdot\nabla b_\epsilon,  \\
&& \pp_t\; b_\epsilon + u_\epsilon\cdot\nabla b_\epsilon =\eta\; \Dd b_\epsilon + b_\epsilon \cdot\nabla u_\epsilon,  \\
&& \nabla\cdot u_\epsilon =0,    \\
&& \nabla \cdot b_\epsilon =0.
\end{eqnarray*}
This system of equations admits a unique global solution $(u_\epsilon, b_\epsilon)$ that satisfies the global {\it a priori} bound stated in Proposition \ref{prop3} uniformly in terms of $\epsilon$. By going through a standard limit process, we conclude that $(u_\epsilon, b_\epsilon)$ converge to a weak solution of (\ref{ueq})-(\ref{divb}) with $\nu_1=\nu_2=0$ and $\eta_1=\eta_2=\eta$. This completes the proof of Theorem \ref{major2}.

\vskip.1in
\subsection{A logarithmic inequality} This subsection presents a logarithmic Sobolev inequality, which plays an important role in the proof of Theorem  \ref{cond_reg}. A similar inequality was previously obtained by Danchin and Paicu \cite{DP} and their proof involves tools from Fourier analysis such as the Littlewood-Paley decomposition. The proof presented here is different and more elementary.

\begin{lemma} \label{logsobolev}
For any function of two variables $f=f(x)$, $x\in {\mathbf R}^2$, the following logarithmic inequality holds
$$
\|f\|_{L^\infty} \le C\, \sup_{q\ge 2}\frac{\|f\|_{q}}{\sqrt{q}}\, \left[\ln(e + \|f\|_{H^2})\right]^\frac12.
$$
\end{lemma}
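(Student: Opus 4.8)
The plan is to obtain the $L^\infty$ bound by decomposing $f$ into low and high frequency parts and optimizing over the cutoff, but without invoking the Littlewood-Paley machinery. Concretely, I would write $f = f_N + f^N$ where $f_N$ is a smooth radial truncation of $f$ to the ball $\{|\xi| \le N\}$ in Fourier space; the low part is controlled by a Bernstein-type argument and the high part by the $H^2$ norm. The key quantitative input is that the $L^q$ norms of $f$ grow no faster than $\sqrt{q}$, which is exactly the hypothesis $M := \sup_{q\ge 2} \|f\|_q/\sqrt{q}<\infty$; this is the ``Gaussian-type'' control that, in two dimensions, barely fails to give $L^\infty$ and produces the logarithmic loss.

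The main steps, in order, are as follows. First I would estimate $\|f_N\|_\infty$. Writing $\widehat{f_N} = \chi(\xi/N)\widehat f$ with $\chi$ a fixed smooth bump equal to $1$ near the origin, one has $\|f_N\|_\infty \le \|\widehat{f_N}\|_1 \le \|\chi(\cdot/N)\|_{q'} \|\widehat f\|_q$ for conjugate exponents, but since $\widehat f$ need not be in any $L^q$ a cleaner route is to use $\|f_N\|_\infty \le \|K_N * f\|_\infty$ where $K_N$ is the kernel of the projection; then Young's inequality gives $\|f_N\|_\infty \le \|K_N\|_{p'}\|f\|_p$ with $\|K_N\|_{p'} \le C N^{2/p}$ (by scaling, since $K_N(x) = N^2 K_1(Nx)$), so $\|f_N\|_\infty \le C N^{2/p}\|f\|_p \le C N^{2/p}\sqrt{p}\,M$ for every $p\ge 2$. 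Optimizing the elementary function $p \mapsto N^{2/p}\sqrt p$ over $p\in[2,\infty)$: writing $N^{2/p} = e^{(2\ln N)/p}$ and choosing $p \asymp \ln N$ (valid once $N \ge e$) yields $N^{2/p}\sqrt p \le C\sqrt{\ln N}$, so $\|f_N\|_\infty \le C M \sqrt{\ln N}$. Second, the high part: $\|f^N\|_\infty \le C\|f^N\|_{H^2}^{1/2}\|f^N\|_{L^2}^{?}$ — more precisely, since $f^N$ has Fourier support in $|\xi|\ge N$, we have $\|f^N\|_2 \le N^{-2}\||\xi|^2 \widehat f\|_2 \le N^{-2}\|f\|_{H^2}$, and the 2D Agmon/Gagliardo–Nirenberg inequality $\|g\|_\infty \le C\|g\|_2^{1-\theta}\|D^2 g\|_2^{\theta}$ (one can also just use $\|g\|_\infty \le C\|\widehat g\|_1 \le C\|(1+|\xi|^2)g\|_2\,\||\xi|\ge N\text{-weight}\|_2$) gives $\|f^N\|_\infty \le C N^{-1}\|f\|_{H^2}$. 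Third, I add the two: $\|f\|_\infty \le C M\sqrt{\ln N} + C N^{-1}\|f\|_{H^2}$, and choose $N = e + \|f\|_{H^2}$ (harmlessly $\ge e$), which makes the second term $\le C$ and leaves $\|f\|_\infty \le C M [\ln(e+\|f\|_{H^2})]^{1/2} + C$. Finally I absorb the additive constant: since trivially $M = \sup_q \|f\|_q/\sqrt q \ge \|f\|_2/\sqrt 2$ is comparable to a norm controlling $\|f\|_\infty$ from below only up to the log, and since $[\ln(e+\|f\|_{H^2})]^{1/2}\ge 1$, the lone $+CM$ term is dominated, giving the stated inequality.

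The main obstacle I expect is the bookkeeping around the low-frequency estimate: one must be careful that the quantity $\sup_{q\ge2}\|f\|_q/\sqrt q$ really does tame $\|K_N\|_{p'}\|f\|_p$ for the \emph{optimal} $p$, i.e. that the constant $C$ in $\|K_1\|_{p'}\le C$ can be taken uniform in $p$ (it can, since $\|K_1\|_{p'}\le \|K_1\|_1 + \|K_1\|_\infty$ for $p'\ge 1$, both finite because $K_1$ is a fixed Schwartz function), and that the minimization $\min_{p\ge 2} N^{2/p}\sqrt p$ is genuinely $O(\sqrt{\ln N})$ and not, say, $O(\ln N)$ — a short calculus check with $p = 2\ln N$ settles this. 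A secondary technical point is choosing the smooth cutoff $\chi$ so that both $K_N$ (its inverse transform) is nonnegative or at least in $L^1\cap L^\infty$ with scaling-covariant norms, and so that the complementary multiplier $1-\chi$ is supported in $|\xi|\gtrsim N$; a standard Littlewood–Paley-free choice such as $\chi$ radial, smooth, $\equiv 1$ on $|\xi|\le 1$, $\equiv 0$ on $|\xi|\ge 2$ works, and one only loses harmless constant factors. No new ideas beyond Young's inequality, Plancherel, and a one-variable optimization are needed; the content is entirely in pushing the exponent in $\|f\|_p \lesssim \sqrt p$ through the scaling $N^{2/p}$ and extremizing.
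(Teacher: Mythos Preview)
Your frequency-splitting argument is sound and takes a genuinely different route from the paper. The paper, following Hou and Li, works in physical space: it multiplies $f$ by a cutoff $\phi$ supported in $B_1$, represents $(f\phi)^p(0)$ through the Green's function $\frac{1}{2\pi}\ln|y|$ of the 2D Laplacian, splits the resulting integral over $B_\epsilon$ and $B_1\setminus B_\epsilon$, and then optimizes by setting $\epsilon=\|\Delta(f\phi)\|_2^{-3/2}$ and $p=\ln(1/\epsilon)$. Your decomposition $f=f_N+f^N$ with Young's inequality on the low part and Plancherel on the high part is cleaner and makes the mechanism transparent: the hypothesis $\|f\|_q\le M\sqrt q$ is exactly what converts the Bernstein factor $N^{2/p}$ into $\sqrt{\ln N}$ after optimizing in $p$. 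The paper's advantage is that it avoids the Fourier transform entirely.

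One point to flag: your last step, absorbing the additive constant, does not go through as written. With $N=e+\|f\|_{H^2}$ the remainder $CN^{-1}\|f\|_{H^2}$ is bounded by an \emph{absolute} constant, not by $CM$, and there is no uniform inequality $C\le CM[\ln(e+\|f\|_{H^2})]^{1/2}$: take $f(x)=r\,\phi(x/r)$ for a fixed bump $\phi$ and let $r\to0$, so that $\|f\|_{H^2}\asymp\|D^2\phi\|_2$ stays bounded while $M\asymp r/\sqrt{\ln(1/r)}\to0$ and $\|f\|_\infty=r$. The paper's proof in fact has the same gap --- the choices $\epsilon=\|\Delta w\|_2^{-3/2}$ and $p=\ln(1/\epsilon)$ tacitly require $\|\Delta w\|_2$ to be bounded below --- so the lemma as stated is slightly too strong. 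What both arguments genuinely establish is
\[
\|f\|_\infty \le C\,M\,\bigl[\ln(e+\|f\|_{H^2})\bigr]^{1/2}+C,
\]
and this is all that is needed when the lemma is fed into the Gr\"onwall argument for the conditional-regularity theorem.
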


\begin{proof} We follow the approach of Hou and Li \cite{HL}. Denote by $B_r$ the disk centered at the origin with radius $r$. Let $\phi\in C^\infty({\mathbf R}^2)$ be a smooth cutoff function satisfying
$$
\phi(0)=1, \quad |\nabla \phi| \le C, \quad |\Delta \phi| \le C, \quad \mbox{supp} \;\phi \subset B_1.
$$
Set $w=f \phi$. According to the solution formula of the 2D Laplace equation, we have, for any $p\ge 2$,
\begin{eqnarray*}
w^p(0) &=&\frac{1}{2\pi} \int_{B_\epsilon} (\ln |y| -\ln \epsilon) \Delta w^p (y)\, dy + \frac{1}{2\pi} \int_{B_1\setminus B_\epsilon} (\ln |y| -\ln \epsilon) \Delta w^p (y)\, dy \\
&=& I+ II.
\end{eqnarray*}
Since
$$
\Delta w^p = p \;w^{p-1} \;\Delta w + p(p-1)\; w^{p-2}\; |\nabla w|^2,
$$
we obtain by applying H\"{o}lder's inequality
$$
|I| \le \frac{p}{2\pi} \, \epsilon^{\frac23}\, \left[\|\Delta w\|_{2}\,
\|w\|_{{6(p-1)}}^{p-1} + (p-1)\,\|\nabla w\|_{4}^2 \,\|w\|_{{6(p-2)}}^{p-2}\right].
$$
By the embedding inequality
$$
\|\nabla w\|_{4} \le C \|w\|_{2}^\frac14 \, \|\Delta w\|_{2}^\frac34,
$$
we have, for $C$ independent of $p$,
$$
|I| \le C p\, \epsilon^{\frac23}\, \|\Delta w\|_{2}\, \|w\|_{{6(p-1)}}^{p-1}
+ C p(p-1)\,\epsilon^{\frac23}\, \|w\|_{2}^\frac12 \, \|\Delta w\|_{2}^\frac32\, \|w\|_{{6(p-2)}}^{p-2}.
$$
Integrating by parts in $II$  yields
$$
II = \frac{p}{2\pi} \int_{B_1\setminus B_\epsilon} w^{p-1} \, \frac{y\cdot \nabla w}{|y|^2} \,dy.
$$
By H\"{o}lder's inequality,
$$
|II| \le C\, p \, \left(\ln\frac1{\epsilon} \right)^\frac12\, \|\nabla w\|_{4}\, \|w\|_{{4(p-1)}}^{p-1} \le  C  p \, \left(\ln\frac1{\epsilon} \right)^\frac12\, \|w\|_{2}^\frac14 \,\|\Delta w\|_{2}^\frac34\, \|w\|_{{4(p-1)}}^{p-1}.
$$
Now, set
$$
\epsilon^\frac23 \|\Delta w\|_{2} =1 \quad \mbox{or}\quad \epsilon = \|\Delta w\|_{2}^{-\frac32} \quad\mbox{and}\quad  p = \ln\frac1{\epsilon}.
$$
\vspace{.2in}
We then have
\begin{eqnarray*}
|w(0)| &\le& C\, p^\frac{1}{p} \,\|w\|_{{6(p-1)}}^{1-\frac1p} + C (p(p-1))^{\frac1p} \, \|w\|_{2}^\frac1{2p} \,\|\Delta w\|_{2}^\frac1{2p}\,
\|w\|_{{6(p-2)}}^{1-\frac2p}
\\
&& + \,C\, p^{\frac3{2p}}\,\|w\|_{2}^\frac1{4p} \,\|\Delta w\|_{2}^{\frac3{4p}}\,\|w\|_{{4(p-1)}}^{1-\frac1p}.
\end{eqnarray*}
Use the fact that $p^\frac{1}{p}<C$, $(p(p-1))^{\frac1p}<C$, and
$$
\|\Dd w\|_{2}^\frac1{2p} = \epsilon^{\frac{1}{3\ln \epsilon}} = e^\frac13, \quad \|w\|_{q} \le \sqrt{q}\, \sup_{q\ge 2} \frac{\|w\|_{q}}{\sqrt{q}},
$$
we obtain that
$$
|w(0)| \le C \,\sup_{q\ge 2} \frac{\|w\|_{q}}{\sqrt{q}} \ln^\frac12(e + \|\Delta w\|_{2}).
$$
Noticing that
$$
|f(0)=|w(0)| \quad \mbox{and}\quad \|\Delta w\|_{2} \le C(\|f\|_{2} + \|\Delta f\|_{2}) \le C \|f\|_{H^2},
$$
we conclude the proof of Lemma \ref{logsobolev}.
\end{proof}

\vskip.1in
\subsection{Proof of Theorem \ref{cond_reg}}  To show the regularity, we bound $\|(u, b)\|_{H^3}$. According to Proposition \ref{prop3}, $\|(u, b)\|_{H^1}$ admits a global uniform bound. Now, consider $\nabla \om$ and $\nabla j$, which satisfy
\begin{eqnarray}
\pp_t \nabla\om + u\cdot \na (\na\om) &=& -(\nabla u ) \, \na \om  + b\cdot \na (\na j) + (\na b )\, \na j,  \nonumber \\
\pp_t \nabla j  + u\cdot \na (\na j) &=& \eta \, \Delta (\nabla j) -(\nabla u ) \, \na j  + b\cdot \na (\na \om) + (\nabla b)\,\nabla \om \nonumber \\
&& + \;2\,\nabla[\pp_x b_1 (\pp_x u_2 + \pp_y u_1)] - 2 \nabla [\pp_x u_1 (\pp_x b_2 + \pp_y b_1)]. \nonumber
\end{eqnarray}
Therefore,
\begin{eqnarray*}
&&
\frac12\,\frac{d}{dt} \left(\|\na \om\|_2^2 + \|\na j\|_2^2 \right) + \eta \|\Dd j\|_2^2
=-\int \na \om\cdot\na u\cdot \na \om -\int \na u\cdot \na j\cdot \na j \, +\, 2 \int \na b\cdot \na j \cdot \na \om  \\
&& \qquad \qquad \qquad\quad
+ \, 2 \int \nabla [\pp_x b_1(\pp_y u_1 + \pp_x u_2)]\cdot \na j -2 \int \nabla [\pp_x u_1 (\pp_y b_1 + \pp_x b_2)]\cdot \nabla j \\
&& \qquad \qquad \qquad
 \equiv K_1 +K_2 +K_3 +K_4+K_5.
\end{eqnarray*}
The terms on the right can be estimated as follows.
\begin{eqnarray*}
K_1 &\le & \|\na u\|_\infty\; \|\na \om\|_2^2. \\
K_2 &=& -\int \na u\cdot \na j\cdot \na j  \le \|\na u\|_2 \|\na j\|_4^2
\\
&\le & C \, \|\na u\|_2 \|\na j\|_2 \|\Dd j\|_2\\
&\le & \frac{\eta}{8} \|\Dd j\|_2^2 \,+\, C\; \|\om\|_2^2 \|\na j\|_2^2.
\end{eqnarray*}

\begin{eqnarray*}
K_3 &=& 2\int \na b\cdot \na \om\cdot \na j  \le 2\|\na \om\|_2 \;\|\na b\|_4 \|\na j\|_4 \\
&\le& C \,\|\na \om\|_2 \; \|\na b\|_2^\frac12\; \|\Dd b\|_2^\frac12\; \|\na j\|_2^\frac12\; \|\Dd j\|_2^\frac12 \\
&\le& \frac{\eta}{8} \|\Dd j\|_2^2 \,+\, C\;\|\na \om\|_2^\frac43\; \|\na b\|_2^\frac23\; \|\na j\|_2^\frac43 \\
&\le & \frac{\eta}{8} \|\Dd j\|_2^2 \,+\, C\;\|j\|_2^\frac23\; \|\nabla j\|_2^\frac23\; (\|\na \om\|_2^2 + \|\na j\|_2^2).
\end{eqnarray*}

\begin{eqnarray*}
K_4 &=& 2 \int \nabla [\pp_x b_1(\pp_y u_1 + \pp_x u_2)]\cdot \na j \\
&=& 2 \int \pp_x \nabla b_1 \cdot \na j \;(\pp_y u_1 + \pp_x u_2) + 2 \int \pp_x b_1 (\pp_y \na\,u_1 + \pp_x \na\,u_2)\cdot \na j \\
&\le& 4\int |\na j|^2 |\na u| \;+\; 4 \int |\na b|\;|\na \om|\; |\na j|\\
&\le & \frac{\eta}{4} \|\Dd j\|_2^2 \,+\, C\;\|\om\|_2^2 \|\na j\|_2^2 + C\;\|j\|_2^\frac23\; \|\nabla j\|_2^\frac23\; (\|\na \om\|_2^2 + \|\na j\|_2^2).
\end{eqnarray*}
Putting together these estimates, we have
\begin{eqnarray*}
&&
\frac{d}{dt} \left(\|\na \om\|_2^2 + \|\na j\|_2^2 \right) + \eta \|\Dd j\|_2^2 \\
&&
\qquad \quad \le \|\nabla u\|_\infty \, \|\na \om\|_2^2  +  C\;\|\om\|_2^2 \|\na j\|_2^2 + C\;\|j\|_2^\frac23\; \|\nabla j\|_2^\frac23\; (\|\na \om\|_2^2 + \|\na j\|_2^2).
\end{eqnarray*}
We now bound the third-order derivatives of $(u, b)$. For any multi-index $\beta$  with $|\beta|=3$, $D^\beta u$ and $D^\beta b$ satisfy
\begin{eqnarray*}
&& \pp_t\, D^\beta u + u\cdot \na D^\beta u = -\na D^\beta p + b\cdot \na D^\beta b
   - [D^\beta, u\cdot\na] u + [D^\beta, b\cdot\na] b, \\
&& \pp_t\, D^\beta b + u\cdot \na D^\beta b = \eta\, \Delta D^\beta b + b\cdot \na D^\beta u - [D^\beta, u\cdot\na] b + [D^\beta, b\cdot\na],
\end{eqnarray*}
where $[D^\beta, f\cdot\na] g=D^\beta(f\cdot\na g) - f \cdot\na D^\beta g$. Taking the inner products of these equations with $D^\beta u$ and $D^\beta b$, respectively, and integrating by parts, we have
$$
\frac12 \frac{d}{dt} \left(\|D^\beta u\|_2^2 + \|D^\beta b\|_2^2\right) + \eta \|\na D^\beta b\|_2^2  =L_1 + L_2 + L_3 +L_4
$$
where
$$
L_1 = -([D^\beta, u\cdot\na] u, D^\beta u), \qquad L_2 = ([D^\beta, b\cdot\na] b, D^\beta u),
$$
$$
L_3 = -([D^\beta, u\cdot\na] b, D^\beta b), \qquad L_4 = ([D^\beta, b\cdot\na]b, D^\beta b).
$$
To bound $L_1$, $L_2$, $L_3$ and $L_4$, we recall the commutator estimate (see \cite[p.334]{KPV})
\begin{equation} \label{comm}
\| [D^\beta, f\cdot\na] g \|_p \le C\, (\|\na f\|_{p_1} \,\|\na g\|_{W^{2,p_2}} + \|f\|_{W^{3,p_3}} \, \|\na g\|_{p_4})
\end{equation}
valid for any $p$, $p_2$, $p_3\in (1,\infty)$ and $\frac{1}{p} = \frac{1}{p_1}+\frac{1}{p_2} = \frac{1}{p_3} + \frac{1}{p_4}$. Applying this inequality, we obtain
\begin{eqnarray*}
|L_1 | &\le& \|[D^\beta, u\cdot\na] u\|_2 \, \|D^\beta u\|_2 \le C\; \|\na u\|_\infty \|u\|_{H^3}\, \|D^\beta u\|_2, \\
|L_2 | &\le& \|[D^\beta, b\cdot\na] b\|_2\, \|D^\beta u\|_2 \le C\; (\|\na b \|_4\,\|\na b \|_{W^{2,4}} + \|b\|_{W^{3,4}}\, \|\na b\|_4)\, \|D^\beta u\|_2.
\end{eqnarray*}
By the basic calculus inequality, for any $f\in H^1({\mathbf R}^2)$,
\begin{equation} \label{fb}
\|f\|_4 \le C\, \|f\|_2^\frac12\, \|\nabla f\|_2^\frac12,
\end{equation}
we have
$$
|L_2| \le C \; \|\na b\|_2^\frac12\, \|\Delta b\|_2^\frac12\, \|b\|_{H^3}^\frac12 \, \|\nabla b\|_{H^3}^\frac12 \, \|D^\beta u\|_2.
$$
By Young's inequality,
\begin{eqnarray*}
|L_2| &\le& \frac{\eta}{4}\, \|\nabla b\|_{H^3}^2 + C\, \|\na b\|_2^\frac23\, \|\Delta b\|_2^\frac23\, \|b\|_{H^3}^\frac23 \, \|D^\beta u\|_2^\frac43 \\
&\le& \frac{\eta}{4}\, \|\nabla b\|_{H^3}^2 + C\, \|\na b\|_2^\frac23\, \|\Delta b\|_2^\frac23\, (\|b\|_{H^3}^2 + \|D^\beta u\|_2^2).
\end{eqnarray*}
By (\ref{comm}) again,
$$
|L_3| \le \|[D^\beta, u\cdot\na] b\|_{\frac43}\, \|D^\beta b\|_4 \le C\, \left(\|\na u\|_2 \,\|\na b\|_{W^{2,4}} \,+\,\|u\|_{H^3}\, \|\na b\|_4 \right)\, \|D^\beta b\|_4.
$$
Therefore,
\begin{eqnarray*}
|L_3| & \le & C\, \|\om\|_2\, \|b\|_{H^3}\, \|\nabla b\|_{H^3} + C \; \|\na b\|_2^\frac12\, \|\Delta b\|_2^\frac12\, \|b\|_{H^3}^\frac12 \, \|\nabla b\|_{H^3}^\frac12 \,\|u\|_{H^3} \\
& \le &  \frac{\eta}{4}\, \|\nabla b\|_{H^3}^2 + C\,\|\om\|^2_2\, \|b\|^2_{H^3} +C\; \|\na b\|_2^\frac23\, \|\Delta b\|_2^\frac23\, (\|b\|_{H^3}^2 + \| u\|_{H^3}^2).
\end{eqnarray*}
Similarly, $L_4$ is bounded as follows.
$$
|L_4| \le \frac{\eta}{4}\, \|\nabla b\|_{H^3}^2 + C\, \|\na b\|_2^\frac23\, \|\Delta b\|_2^\frac23\, \|b\|_{H^3}^2.
$$
Combining all these estimates, we obtain
$$
\frac{d}{dt} (\|u\|^2_{H^3} + \|b\|^2_{H^3}) + \eta\, \|\na b\|^2_{H^3} \le C \|\nabla u\|_\infty\, \|u\|^2_{H^3} + \,\|\om\|^2_2\, \|b\|^2_{H^3} +C\;\|j\|_2^\frac23 \|\nabla j\|_2^\frac23\,\|b\|^2_{H^3}.
$$
Applying Lemma \ref{logsobolev} to bound $\|\na u\|_\infty$, we obtain the regularity part of Theorem \ref{cond_reg}.

\vspace{.1in}
To prove the uniqueness, we consider the difference
$$
(W, B) = (\tilde{u}, \tilde{b})- (u,b),
$$
which satisfies the equations
\begin{eqnarray}
&&
W_t + \tilde{u}\cdot\nabla W + W\cdot\nabla u = -\nabla P + \tilde{b}\cdot\nabla B + B\cdot\nabla b \label{diff1},  \\
&&
B_t + \tilde{u}\cdot\nabla B + W\cdot\nabla b =\eta \Dd B + \tilde{b}\cdot\nabla W + B\cdot\na u, \label{diff2}
\end{eqnarray}
where $P$ is the difference between the corresponding pressures. Adding the inner products of (\ref{diff1}) with $W$ and of (\ref{diff2}) with $B$ and integrating by parts, we obtain
\begin{eqnarray}
\frac12\,\frac{d}{dt} \left(\|W\|_2^2 + \|B\|_2^2 \right) + \eta \|\na B\|_2^2 &\le & \int |W\cdot\nabla u \cdot W| + \int |B\cdot\na u\cdot B| +2 \int |W|\,|\na b|\,|B| \nonumber\\
&\le& \|\na u\|_\infty \; \left(\|W\|_2^2 + \|B\|_2^2 \right) + 2 \, \|W\|_2\,\|B\|_4\,\|\na b\|_4. \label{wd}
\end{eqnarray}
By (\ref{fb}), we have
\begin{eqnarray*}
2\, \|W\|_2\,\|B\|_4\,\|\na b\|_4 &\le& C\,\|W\|_2 \|B\|_2^\frac12 \, \|\na B\|_2^\frac12 \,\|\na b\|_2^\frac12\, \|\Delta b\|_2^\frac12 \\
&\le& \frac{\eta}{2}\,\|\na B\|_2^2 + C\, \|W\|_2^\frac43\, \|B\|_2^\frac23\, \|\na b\|_2^\frac23 \|\Delta b\|_2^\frac23 \\
&\le& \frac{\eta}{2}\,\|\na B\|_2^2 + C\, \|\na b\|_2^\frac23 \|\Delta b\|_2^\frac23 \left(\|W\|_2^2 + \|B\|_2^2 \right).
\end{eqnarray*}
Inserting the above estimate in (\ref{wd}) and applying Lemma \ref{logsobolev} to bound $\|\na u\|_\infty$, we obtain the desired uniqueness. This completes the proof of Theorem \ref{major2}.

 \vspace{.4in}
\section*{Acknowledgments}
Cao is partially supported by NSF grant DMS 0709228 and a FIU foundation. Wu is partially supported by the AT \& T Foundation at OSU.


\begin{thebibliography}{99}
\bibitem{CKS} R. Caflisch,I. Klapper and G. Steele, Remarks on singularities, dimension and energy dissipation for ideal hydrodynamics and MHD, {\it Comm. Math. Phys.} {\bf  184} (1997),  443--455.

\bibitem{Ch} D. Chae, Global regularity for the 2D Boussinesq equations with partial viscosity terms, {\em Advances in Math.} {\bf 203} (2006), 497-513.

\bibitem{DP} R. Danchin and M. Paicu, Global existence results for the anisotropic Boussinesq system in dimension two, arXiv: 0809.4984v1 [math.AP] 19 Sep 2008.

\bibitem{DL} G. Duvaut and  J.-L. Lions, In\'{e}quations en thermo\'{e}lasticit\'{e} et magn\'{e}tohydrodynamique, {\it Arch. Rational
Mech. Anal.} {\bf  46} (1972), 241-279.

\bibitem{HL} {\scshape T. Hou and C. Li}, Global well-posedness of the viscous Boussinesq equations, {\em Discete and Continuous Dynamics Systems} {\bf 12} (2005), 1-12.

\bibitem{KPV} C.E. Kenig, G. Ponce and L. Vega, Well-posedness of the initial value problem for the Korteweg-de Vries equation, {\it J. Amer. Math. Soc. \bf  4 }(1991),  323--347.

\bibitem{SeTe} M. Sermange and R. Temam, Some mathematical questions related to the MHD equations, {\it Comm. Pure Appl. Math.}  {\bf 36}  (1983),  635--664.

\bibitem{Wu} J. Wu, Viscous and inviscid magnetohydrodynamics equations, {\it J. d'Analyse Math.}\,\,{\bf  73}  (1997), 251--265.
\end{thebibliography}
\end{document}